%% file: Circle_Pattern_Theorem.tex
\documentclass[11pt,reqno,]{amsart}
\usepackage{dsfont}
\usepackage{mathrsfs}
\usepackage{amssymb}
\usepackage{mathtools}
\usepackage{amsfonts}
\usepackage{hyperref}
\usepackage{graphicx}
\usepackage{subcaption}
\usepackage{xcolor}
\usepackage{bbm}
\usepackage{color}
\usepackage{pict2e}
\usepackage{epsf,xy,epic,amscd}
\usepackage{xspace}
\usepackage{comment}
\usepackage{tikz-cd}

\usepackage{tikz}
\usepackage{pgfplots}
\pgfplotsset{compat=1.15}
\usepackage{mathrsfs}
\usetikzlibrary{arrows}

\definecolor{ffqqqq}{rgb}{1.,0.,0.}
\definecolor{qqffqq}{rgb}{0.,1.,0.}
\definecolor{ffffff}{rgb}{1.,1.,1.}

\theoremstyle{plain}
\newtheorem{theorem}{Theorem}[section]
\newtheorem{lemma}[theorem]{Lemma}

\newtheorem{proposition}[theorem]{Proposition}

\theoremstyle{definition}
\newtheorem{definition}[theorem]{Definition}
\newtheorem{remark}[theorem]{Remark}

\numberwithin{equation}{section}

\renewcommand{\S}{\ensuremath{\Sigma}\xspace}

\newcommand{\hp}{\ensuremath{\hat{p}}\xspace}

\newcommand{\sT}{\ensuremath{\mathcal{T}}\xspace}

\newcommand{\sK}{\ensuremath{\mathcal{K}}\xspace}

\newcommand{\Hmm}[1]{\leavevmode{\marginpar{\tiny%
$\hbox to 0mm{\hspace*{-0.5mm}$\leftarrow$\hss}%
\vcenter{\vrule depth 0.1mm height 0.1mm width \the\marginparwidth}%
\hbox to
0mm{\hss$\rightarrow$\hspace*{-0.5mm}}$\\\relax\raggedright #1}}}

\AtEndDocument{\bigskip{\footnotesize
    \noindent Aijin Lin: \texttt{linaijin@nudt.edu.cn} \\
    \textsc{College of Science, \\ National University of Defense Technology, 410073 Changsha, China} \par
    \addvspace{\medskipamount}
    \noindent Qingyi Liu: \texttt{liuqingyi@nudt.edu.cn} \\
    \textsc{College of Science, \\ National University of Defense Technology, 410073 Changsha, China} \par
}}

\hfuzz=\maxdimen
\DeclareFixedFont{\Acknowledgment}{OT1}{cmr}{bx}{n}{14pt}
\begin{document}

\title{Circle Pattern Theorem for Quasi-simplicial Triangulated Surfaces}
\author{Aijin Lin$^\ast$, Qingyi Liu}
\date{\today}

\begin{abstract}
The Circle Pattern Theorem characterizes the existence and rigidity of circle patterns
with prescribed intersection angles on simplicial triangulations of closed surfaces.
In this paper we extend the theorem to \emph{quasi-simplicial} triangulations ---
triangulations that may contain loops and multiple edges, but whose lifts to the
universal cover are simplicial. Chow and Luo first considered such triangulations---under the name \emph{generalized
triangulations} (J.~Differential Geom.~\textbf{63}(1):97--129, 2003)---but with the
strong restriction that any three vertices determine at most one triangle; this
condition keeps the combinatorics within the simplicial complex framework and
consequently excludes most quasi-simplicial triangulations. We remove this restriction, work instead with the more flexible framework of Delta
complexes, and use a finite covering technique to reduce the problem to the simplicial
case. We prove that the curvature image is completely characterized by KAT inequalities
imposed on all subsets of the lifted vertex set.

\medskip
\noindent{\bf Mathematics Subject Classifications (2020):} 52C26, 52C25.

\end{abstract}
\let\thefootnote\relax \footnotetext {$^\ast$Corresponding author}
\maketitle
\tableofcontents

\section{Introduction}
\subsection{Background}
Circle patterns were introduced by Thurston~\cite{Thurston} as a powerful tool to study hyperbolic 3-manifolds, and have since evolved into a central topic in discrete differential geometry. They provide a discrete analogue of conformal maps~\cite{Thurston2,Rodin-Sullivan} and have deep connections to combinatorics~\cite{Beardon-Stephenson}, computational geometry~\cite{Stephenson}, minimal surfaces~\cite{Bobenko-Hoffman-Springborn} and variational principles~\cite{Colin, Bobenko-Springborn}.

Let $\Sigma$ be a closed orientable surface equipped with a triangulation $\sT = (V,E,F)$. A circle pattern of $\sT$-type on $\Sigma$ is a collection of oriented circles $\{C_v\}_{v \in V}$ centered at the vertices, such that whenever there is an edge between $u$ and $w$, the circles $C_u$ and $C_w$ intersect. The exterior intersection angle on each edge $e \in E$ is denoted by $\Phi(e) \in [0,\pi)$. A fundamental question is: given a function $\Phi \colon E \to [0,\pi)$, does there exist a $\sT$-type circle pattern realizing these intersection angles? And if not, can one characterize the set of all possible discrete curvatures?

When the triangulation $\sT$ is \emph{simplicial} (i.e., contains no loops and no multiple edges), a celebrated answer is provided by the Circle Pattern Theorem, which originates in the work of Koebe~\cite{Koebe} and Andreev~\cite{Andreev}, was reinterpreted by Thurston~\cite[Chap.~13]{Thurston}, and has since been extended in several directions: Chow and
Luo~\cite{Chow-Luo} introduced the combinatorial Ricci flow and characterized its
convergence via KAT inequalities; Pepa and Popelensky~\cite{PP17} and
Zhou~\cite{Zhou17} studied circle patterns with obtuse intersection angles, with Zhou's treatment
being more systematic; Ge and Jiang~\cite{GJ19} generalized to inversive
distance and studied the corresponding combinatorial Ricci flow; and Zhang and
Zheng~\cite{ZZ24} investigated the combinatorial Ricci flow for obtuse angles. To state it, we need the following condition on the intersection angles, which originates in the work of~\cite{PP17} and~\cite{Zhou17}: for every triangle with edges $e_1, e_2, e_3$,
\begin{equation}\label{cond:S}
\cos\Phi(e_1) + \cos\Phi(e_2)\cos\Phi(e_3) \ge 0,
\end{equation}
together with its cyclic permutations. This condition not only ensures that every triple of positive radii yields a non-degenerate geometric triangle, but, more importantly, it guarantees the monotonicity properties of the inner angles with respect to the radii (Lemma~\ref{L-2-4}), which are essential for the injectivity of the curvature map.

For a non-empty proper subset $I\subset V$, let $\Sigma(I)$ be the set of all cells of $\mathcal{T}$ whose vertices lie entirely in $I$.  Its Euler characteristic is $\chi(\Sigma(I)) = |I| - |E(I)| + |F(I)|$, where $E(I)\subset E$ and $F(I)\subset F$ denote the sets of edges and faces, respectively, with all vertices in $I$.  The \emph{link} of $I$, denoted by $\operatorname{Lk}(I)$, is the set of pairs $(e,\triangle)$ with $e\in E$ and $\triangle\in F$ such that
\begin{itemize}
    \item $e$ is a side of $\triangle$ (i.e., $e\prec\triangle$);
    \item both endpoints of $e$ lie outside $I$;
    \item $\triangle$ has at least one vertex in $I$.
\end{itemize}

\begin{theorem}[Circle Pattern Theorem for simplicial triangulations]\label{T-1-1}
Let $\sT$ be a simplicial triangulation of a closed orientable surface $\Sigma$ and let $\Phi\colon E\to[0,\pi)$ satisfy~\eqref{cond:S}. Then the curvature map $\sK \colon \mathbb{R}_{>0}^V \to \mathbb{R}^V$ has the following image: a vector $(K_v)_{v\in V}$ belongs to $\sK(\mathbb{R}_{>0}^V)$ if and only if
\begin{equation}\label{eq:KAT-simplicial}
\sum_{v\in I}K_v > -\sum_{(e,\triangle)\in\operatorname{Lk}(I)}(\pi-\Phi(e)) + 2\pi\chi(\Sigma(I))
\end{equation}
for every non-empty proper subset $I\subset V$, and
\[
\sum_{v\in V}K_v = 2\pi\chi(\Sigma)\text{ in the Euclidean background},\quad
\sum_{v\in V}K_v > 2\pi\chi(\Sigma)\text{ in the hyperbolic background}.
\]
\end{theorem}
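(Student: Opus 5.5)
The plan is the standard variational argument in the style of Colin de Verdière, Chow--Luo and Zhou: prove that $\sK$ is injective, identify its image as an open convex polytope, and then show the image is exactly that polytope by a properness argument. I will work in logarithmic coordinates $u_v=\ln r_v$ (Euclidean) or $u_v=\ln\tanh(r_v/2)$ (hyperbolic). The curvature is $K_v=2\pi-\sum_{\triangle\ni v}\theta_v^\triangle$, where $\theta_v^\triangle$ is the inner angle at $v$ in the triangle built from the radii and the intersection angles $\Phi$; condition~\eqref{cond:S} guarantees this triangle exists for every triple of positive radii.

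\textbf{Injectivity.} For each face, Lemma~\ref{L-2-4} gives the monotonicity $\partial\theta_i/\partial u_j=\partial\theta_j/\partial u_i\ge 0$ for $i\ne j$, together with $\partial\theta_i/\partial u_i<0$. In the hyperbolic case the area term also gives $\partial(\theta_1+\theta_2+\theta_3)/\partial u_i<0$. So the $1$-form $\sum_i\theta_i\,du_i$ is closed on each face, and summing over faces yields a potential $W(u)$ with $\nabla W=\sK$. The Jacobian $\partial K/\partial u$ is symmetric and diagonally dominant, hence positive semidefinite. In the Euclidean case its kernel is spanned by $(1,\dots,1)$, which reflects the scaling invariance; in the hyperbolic case it is positive definite. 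Therefore $W$ is strictly convex, on the hyperplane $\sum u_v=0$ in the Euclidean case. The domain $\mathbb{R}^V$ (respectively $\mathbb{R}_{<0}^V$) is convex, so $\nabla W$ is injective there and $\sK$ is a local diffeomorphism onto an open set.

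\textbf{Necessity of the inequalities.} Fix $I$. Summing $K_v$ over $v\in I$ and using that each triangle has angle sum $\pi$ (respectively $<\pi$) gives
\[
\sum_{v\in I}K_v=2\pi|I|-\pi|F(I)|-\sum_{\triangle\notin F(I),\,\triangle\cap I\neq\emptyset}\ \sum_{v\in I\cap\triangle}\theta_v^\triangle .
\]
Consider a face with exactly two vertices in $I$; its third vertex lies outside $I$, so it is the $\triangle$ of a unique pair $(e,\triangle)$ only after accounting correctly. Instead I will use the following key geometric inequality, valid under~\eqref{cond:S}: in a triangle with one vertex $w$ outside $I$, the angles at the vertices in $I$ sum to less than $\pi-\theta_w$, and for a face with a single vertex $v\in I$ opposite the edge $e$ we have $\theta_v<\pi-\Phi(e)$. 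The second inequality follows because the two circles at the endpoints of $e$ meet at angle $\Phi(e)$, and the angle subtended at the third center is bounded by the angle between them. Combining these bounds with the Euler count $|E(I)|$, through the relation $3|F(I)|+(\text{boundary edges})=2|E(I)|$, produces~\eqref{eq:KAT-simplicial}. The equality or inequality for $I=V$ is Gauss--Bonnet, since the hyperbolic area is positive.

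\textbf{Sufficiency (the main obstacle).} The set $\mathcal P$ cut out by the strict inequalities and the total-sum condition is a convex open polytope, hence connected. It contains the image, which is open. So it suffices to show that the image is closed in $\mathcal P$: if $u^{(n)}$ leaves every compact set while $\sK(u^{(n)})\to K^*\in\mathcal P$, I must derive a contradiction. After passing to a subsequence and normalizing, I partition $V$ by the asymptotic behavior of $u_v$ and let $I$ be the set of vertices whose radii tend to $0$ fastest (relative to the others). The limits of the angles are then computable. At a vertex of $I$ adjacent to an edge $e$ outside $I$, the angle tends to $\pi-\Phi(e)$. A triangle with two or three vertices in $I$ degenerates in a controlled way, with angles whose sum over the $I$-vertices tends to $\pi$ or becomes flat. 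This makes $\sum_{v\in I}K_v$ tend exactly to the right-hand side of~\eqref{eq:KAT-simplicial}, contradicting strictness at $K^*$. In the hyperbolic case I must also handle radii tending to $\infty$, where the angles tend to $0$. This forces $\sum_{v\in I}K_v\to 2\pi|I|$ for the set $I$ of such vertices, which again contradicts a KAT inequality applied to the complement, or the total-sum condition. The delicate part is computing these limits uniformly under the obtuse-angle condition~\eqref{cond:S}, where the limit angles depend on the relative rates. I would first try to establish them through the monotonicity of Lemma~\ref{L-2-4}, comparing each angle with the angle of a degenerate configuration in which one radius is $0$.
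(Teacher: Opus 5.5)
The paper gives no proof of Theorem~\ref{T-1-1}; it quotes it from Thurston, Chow--Luo and Zhou. Your outline is the standard route there: a strictly convex potential in logarithmic coordinates for injectivity, face-by-face angle bounds plus the count $2|E(I)|=3|F(I)|+\#\{\triangle:\triangle\text{ has exactly two vertices in }I\}$ for necessity, and ``open and closed in a connected convex region'' for sufficiency.

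The Euclidean half goes through after small repairs:
\begin{itemize}
\item For a face with two vertices in $I$ one has $\theta_u+\theta_v=\pi-\theta_w$ in the Euclidean case, not a strict inequality. What you actually need is $\theta_u+\theta_v<\pi$.
\item The bound $\theta_v<\pi-\Phi(e)$ should come from $\partial\theta_v/\partial r_v<0$ together with the limit $\theta_v\to\pi-\Phi(e)$ as $r_v\to0$ in Lemma~\ref{L-2-3}.
\item Lemma~\ref{L-2-4} only gives $\partial\theta_j/\partial u_i\ge0$, so the claim that the Euclidean kernel is spanned by $(1,\dots,1)$ needs one more remark. Zero row sums and a negative diagonal force a positive off-diagonal entry in every row of every face, so the positive-weight graph is connected.
\item If you normalize $\max_v r_v=1$ and take $I=\{v:r_v\to0\}$, the remaining radii have positive limits. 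The joint limits of Lemma~\ref{L-2-3} then apply directly, and no ``fastest rate'' analysis is needed.
\end{itemize}

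The genuine gap is the hyperbolic case in which some $r_v\to\infty$. You get $K_v\to2\pi$ and assert that this contradicts a KAT inequality for the complement or the total-sum condition. It does not. In the hyperbolic background the total curvature is not prescribed, and every stated condition is a lower bound on a partial sum of curvatures, so the stated region is closed under increasing any coordinate.

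In fact the hyperbolic half of the statement is false as written. The vector $K_v\equiv10\pi$ satisfies every condition, since each right-hand side is at most $2\pi\chi(\Sigma(I))\le 2\pi|I|$ (use $3|F(I)|\le 2|E(I)|$). Yet every realizable curvature has $K_v=2\pi-\sum\theta_v^\triangle<2\pi$. So the image is not closed in your $\mathcal P$, and no limit computation can rescue that step.

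The hyperbolic region must also include $K_v<2\pi$ for every $v$, and with that constraint your scheme closes. The hyperbolic cosine law, with $\cosh l_{vw}\ge\min\{1,1+\cos\Phi(vw)\}\cosh r_v\cosh r_w$ and $\cosh l_{ww'}\le 2\cosh r_w\cosh r_{w'}$, gives $\theta_v^\triangle\to0$ as $r_v\to\infty$, uniformly in the other two radii. Hence $r_v\to\infty$ forces $K_v\to2\pi$, contradicting $K^*_v<2\pi$. The radii therefore stay bounded, and the remaining degeneration $I=\{v:r_v\to0\}$ is handled by your limit computation, with $I=V$ excluded by $\sum_v K^*_v>2\pi\chi(\Sigma)$.
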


We shall refer to inequalities of the form~\eqref{eq:KAT-simplicial} as the \emph{KAT inequalities}, named after Koebe, Andreev, and Thurston. These conditions characterize exactly the feasible discrete curvatures for a simplicial triangulation.

However, the requirement that $\sT$ be simplicial excludes many natural and economical triangulations. Chow and Luo first introduced triangulations that permit loops and multiple edges into discrete differential geometry in their foundational work on combinatorial Ricci flow~\cite{Chow-Luo}; they called such triangulations---the simplest kind, namely
those that become simplicial when lifted to the universal cover---\emph{generalized triangulations} and noted that their results extend to this broader setting, but under a significant restriction that any three vertices determine at most one triangle. While this condition keeps the
combinatorics within the familiar simplicial complex framework, it also rules out most triangulations of this kind. More recently, Bonsante and Wolf~\cite{BW25} used the term \emph{quasi-simplicial triangulations} for these objects without this restriction and proved the key lemma (Lemma~\ref{lem:quasi-simplicial}) that they can always be unwrapped to simplicial triangulations on suitable finite covers; we adopt their terminology because it captures the geometric nature of these triangulations more precisely.
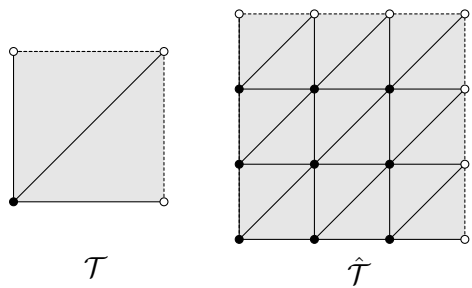
\begin{figure}[htbp]
  \centering
  \input{1vertex-T2}  
  
  \caption{Left: The one-vertex triangulation $\mathcal{T}$ of the torus. Right: Its lifted simplicial triangulation $\widehat{\mathcal{T}}$ on the $3\times 3$ covering. (Open vertices and dashed edges are copies of solid vertices and edges via the deck transformations.)}
  \label{fig:1vertex-T2}
\end{figure}

A striking illustration is the one-vertex triangulation of the torus, which uses only one vertex, three loop edges, and two triangles---a quasi-simplicial triangulation far more compact than any simplicial one (Note that this triangulation is
excluded by Chow--Luo's restriction). More generally, any closed orientable surface with Euler characteristic $\chi(\Sigma) \le 0$ admits a one-vertex triangulation. In contrast, a classical result of Jungerman and Ringel~\cite{JR80} shows that the minimal number of vertices in a simplicial triangulation of a closed orientable surface of genus $g$ is $\lceil (7+\sqrt{1+48g})/2 \rceil$ for all $g\neq 2$, while for $g=2$ the minimum is $10$; in all cases the growth is of order $\sqrt{g}$. Thus, relaxing the simplicial condition reduces the number of vertices from $O(\sqrt{g})$ down to a constant (one vertex), a drastic simplification.

To see how such a quasi-simplicial triangulation can be unwrapped, consider again the one-vertex torus. The torus is the quotient $T^2 = \mathbb{R}^2 / \mathbb{Z}^2$. Its one-vertex triangulation $\mathcal{T}$ consists of a single vertex, three loop edges, and two triangles (see Figure~\ref{fig:1vertex-T2}, left). Taking the finite covering $\widehat{T}^2 = \mathbb{R}^2 / (3\mathbb{Z} \times 3\mathbb{Z})\rightarrow T^2$, whose deck transformation group is $(\mathbb{Z}/3\mathbb{Z})^2$, yields a $3\times 3$ covering. The lifted triangulation $\widehat{\mathcal{T}}$ on $\widetilde{T}^2$ becomes simplicial: every vertex link is a hexagon, and there are no loops or multiple edges (Figure~\ref{fig:1vertex-T2}, right). This illustrates the essential phenomenon: local combinatorial singularities of a quasi-simplicial triangulation are ``unwrapped'' by a finite covering.

The existence of such a finite covering is a consequence of the residual finiteness of surface groups.  Indeed, $\pi_1(\Sigma)$ is finitely generated and residually finite, so for each combinatorial singularity (e.g., a loop or a pair of parallel edges) one can find a finite index normal subgroup whose corresponding covering unwraps that singularity.  Intersecting finitely many such subgroups yields a finite regular covering $\hat p\colon\widehat{\Sigma}\to\Sigma$ for which $\widehat{\mathcal{T}}=\hat p^*\mathcal{T}$ is simplicial.

A further motivation for quasi-simplicial triangulations comes from the geometry of hyperbolic $2$-orbifolds (see, e.g., Ratcliffe~\cite[Chapter~13]{R2019}). Let $g>0$ and let $P\subset\mathbb{H}^2$ be a convex $4g$-gon with angles $\pi/n_1,\dots,\pi/n_{4g}$ ($n_i\in\mathbb{Z}_{\ge 2}$) satisfying $\sum_{i=1}^{4g} 1/n_i < 4g-2$. Then $P = \mathbb{H}^2/\Gamma$ where $\Gamma$ is generated by the reflections in the sides of $P$; as a quotient of $\mathbb{H}^2$, $P$ carries a hyperbolic metric with boundary and corner singularities. Gluing opposite edges of $P$ yields a closed hyperbolic orbifold $\mathcal{O}$ of genus $g$. Thus $P$ has orbifold type $(;n_1,\dots,n_{4g})$, while the closed orbifold $\mathcal{O}$ obtained by gluing opposite edges has type $(2/\!\sum 1/n_i)$. The $4g$ corner vertices of $P$ merge into a single cone point $v$ with total angle $\Theta=\sum_{i=1}^{4g} \pi/n_i$, and the curvature $K=2\pi-\Theta$ satisfies $K=2\pi\chi(\mathcal{O})+\operatorname{Area}(\mathcal{O})$ by Gauss--Bonnet. Figure~\ref{fig:hypTorus} illustrates the special case $g=1$. Using only $v$ as a vertex gives a one-vertex quasi-simplicial triangulation of $\mathcal{O}$; prescribing $K$ directly encodes the cone angle, whereas adding extra vertices complicates the circle pattern problem without changing the global geometry.

\begin{figure}[htbp]
    \centering
    \begin{subfigure}{0.45\textwidth}
        \centering
        \includegraphics[width=\linewidth]{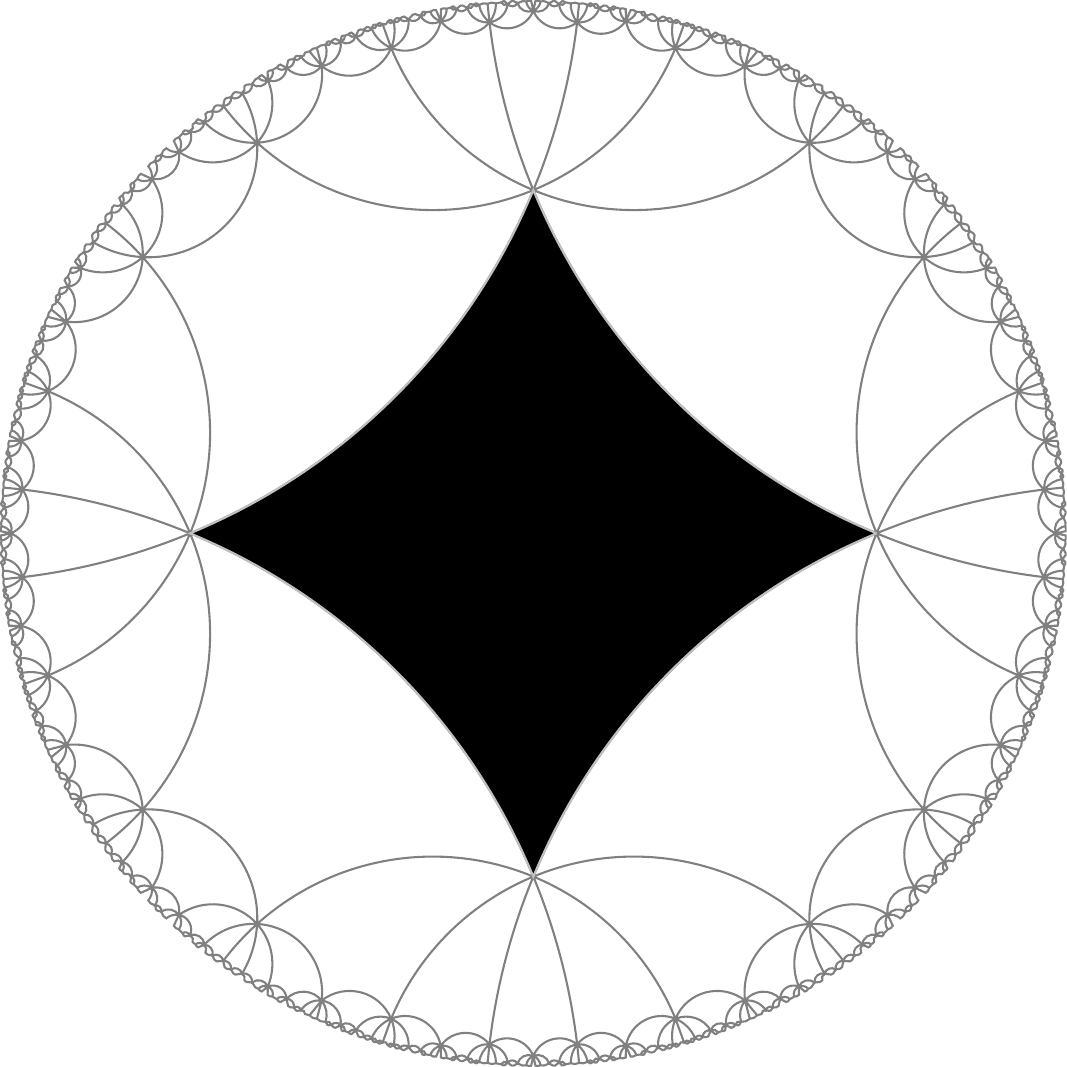} 
        \caption{The fundamental polygon $P$ in the Poincar\'e disk model of $\mathbb{H}^2$, tessellated by the reflections in its sides. Here $n_1=\dots=n_4=4$, so each corner angle is $\pi/4$.}
    \end{subfigure}
    \hfill
    \begin{subfigure}{0.45\textwidth}
        \centering
        \includegraphics[width=\linewidth]{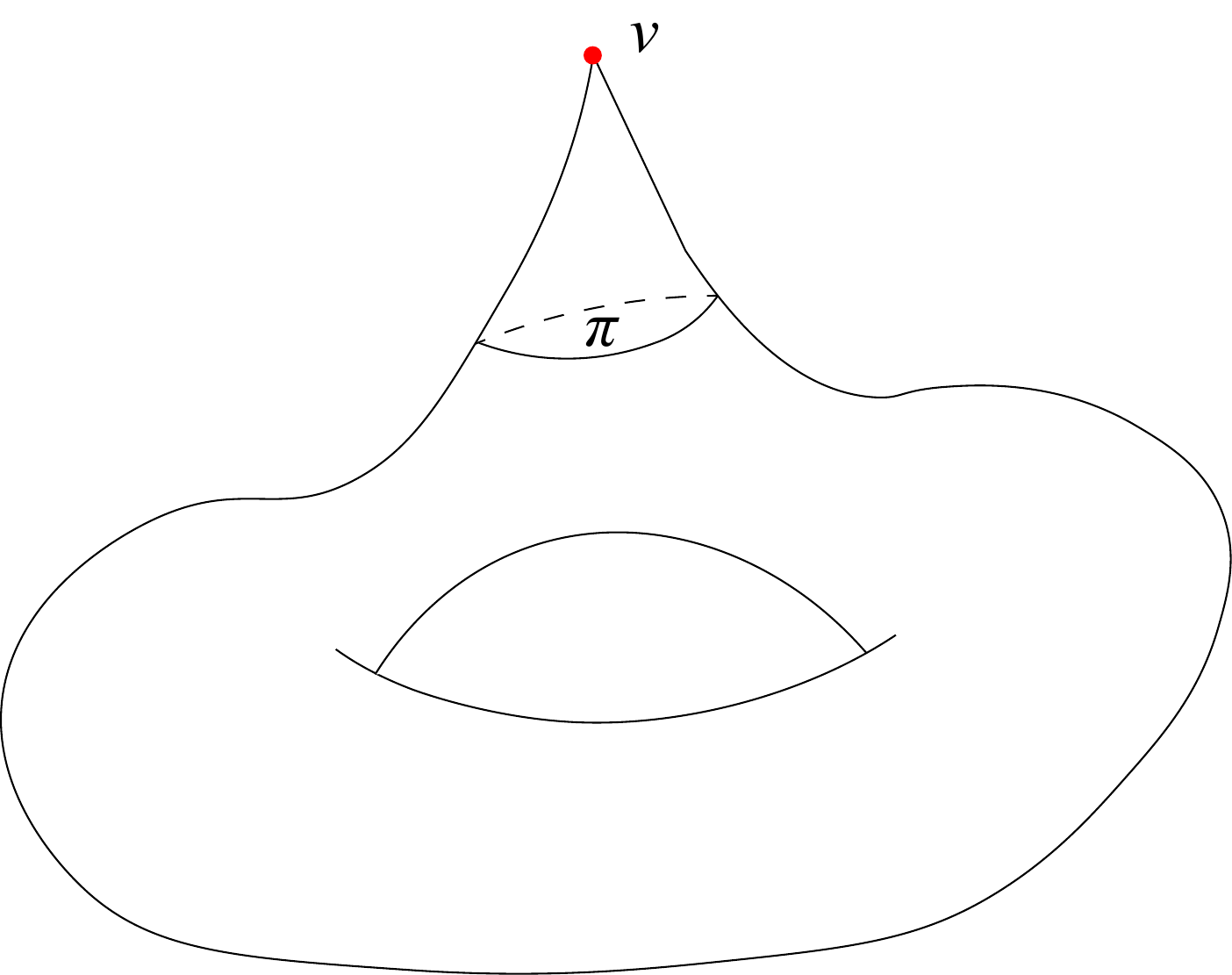} 
        \caption{A schematic view of the resulting hyperbolic torus with one cone point of angle $\pi$.}
    \end{subfigure}
    \caption{Construction of a hyperbolic orbifold of genus $g=1$ with a single cone point.}
    \label{fig:hypTorus}
\end{figure}

The above discussion suggests a natural approach to constructing orbifold metrics via circle patterns. Given a closed orientable surface $\Sigma$ with marked points $v_1,\dots,v_m$ and prescribed cone angles $\Theta_i\in(0,2\pi)$, one seeks a triangulation $\mathcal{T}$ of $\Sigma$ whose vertex set contains all $v_i$. Assigning to each vertex $v$ a target curvature $\overline{K}_v = 2\pi - \Theta_i$ if $v=v_i$ and $\overline{K}_v = 0$ otherwise, the problem reduces to finding a circle packing metric $r\in\mathbb{R}_{>0}^V$ such that $\mathcal{K}(r) = \overline{K}$. If such a circle packing metric exists, the resulting cone metric on $\Sigma$ has the desired cone angles. Thus the existence problem for orbifold metrics translates into characterizing the image of the curvature map $\mathcal{K}$ on a suitable triangulation---the central task addressed by the Circle Pattern Theorem and its generalizations.

The necessity of relaxing the simplicial condition also emerges naturally in computational geometry. In intrinsic geometry processing, powerful algorithms such as intrinsic Delaunay refinement \cite{BS,SSC} routinely generate intermediate triangulations containing loops and multiple edges. Although such singularities are rare on typical inputs, supporting them is essential for the robustness and correctness of these algorithms. Combinatorially, these triangulations are Delta complexes, which form a superset of simplicial complexes; consequently, any algorithm that operates on general Delta complexes automatically applies to ordinary triangle meshes as well. Halfedge mesh data structures, such as those employed in the GeometryCentral library \cite{geometrycentral}, explicitly support Delta complexes precisely to accommodate these intrinsic algorithms with minimal additional implementation effort. Our quasi-simplicial triangulations are precisely the two-dimensional instances of such Delta complexes. This convergence of needs---from pure discrete differential geometry to applied computational geometry---strongly motivates a systematic extension of circle pattern theory to the quasi-simplicial setting.

The purpose of this paper is to relax the simplicial assumption and extend the Circle
Pattern Theorem to quasi-simplicial triangulations.  Our main tool is the finite
covering technique made available by the fact that every quasi-simplicial triangulation
can be lifted to a simplicial one on a suitable cover.

\subsection{Our contribution}
A quasi-simplicial triangulation $\mathcal{T}$ of a closed orientable surface $\Sigma$
is a triangulation whose lift to the universal cover is simplicial; by
\cite[Lemma~2.16]{BW25}, this is equivalent to the existence of a finite covering
$\hat p\colon\widehat{\Sigma}\to\Sigma$ such that $\hat p^*\mathcal{T}$ is a simplicial
triangulation.

As noted in Section~1.1, Chow and Luo~\cite{Chow-Luo} already worked with
quasi-simplicial triangulations, and they observed that, under their restriction
that any three vertices determine at most one triangle, their results on
combinatorial Ricci flow extend to this setting.  However, their treatment remains entirely within the
language of simplicial complexes.  In particular, the KAT inequalities that one
would write down directly on the base quasi-simplicial triangulation $\mathcal{T}$
(without Chow--Luo's restriction) are, in general, not sufficient to characterize
the image of the curvature map; we shall make this point precise in
Section~\ref{sec:why-covering}.

Our approach is fundamentally different.  By placing quasi-simplicial triangulations
in their natural combinatorial and topological framework---that of Delta
complexes---and systematically using the covering theory made available by
Lemma~\ref{lem:quasi-simplicial}, we are able to lift the problem to a genuine
simplicial triangulation on a finite cover, apply the full strength of the classical
Circle Pattern Theorem there, and then push the resulting curvature conditions back
to the base surface.  This yields a complete and explicit characterization of the
curvature image: the correct KAT inequalities must be imposed on \emph{all} subsets
of the \emph{lifted} vertex set~$\widehat{V}$, not merely on those that are preimages of
subsets of~$V$.  These additional constraints constitute the main novel contribution
of this paper.

We are now ready to state the main theorem.

\begin{theorem}\label{T-1-2}
Let $\mathcal{T}$ be a quasi-simplicial triangulation of a closed orientable surface $\Sigma$ and let $\hat p\colon\widehat{\Sigma}\to\Sigma$ be a finite covering such that the lifted triangulation $\hat p^*\mathcal{T}=(\widehat{V},\widehat{E},\widehat{F})$ is simplicial.  
Suppose $\Phi\colon E\to[0,\pi)$ satisfies the condition~\eqref{cond:S}. Then the curvature map $\mathcal{K}$ has the following image: a vector $(K_v)_{v\in V}$ belongs to $\mathcal{K}(\mathbb{R}_{>0}^V)$ if and only if  
\begin{equation}\label{eq:KAT-quasi}
\sum_{\hat v\in \hat I} K_{\hat p(\hat v)} \;>\; -\sum_{(\hat e,\hat\triangle)\in\operatorname{Lk}(\hat I)}(\pi-\Phi(\hat e)) + 2\pi\chi(\widehat\Sigma(\hat I))
\end{equation}
for every non-empty proper subset $\hat I\subset\widehat{V}$, and
\[
\sum_{v\in V}K_v = 2\pi\chi(\Sigma)\text{ in the Euclidean background},\quad
\sum_{v\in V}K_v > 2\pi\chi(\Sigma)\text{ in the hyperbolic background}.
\]
\end{theorem}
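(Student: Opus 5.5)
Our plan is to reduce everything to Theorem~\ref{T-1-1} on the simplicial lift $\widehat{\mathcal T}=\hat p^*\mathcal T$, with the deck group $G$ of $\hat p$ doing the bookkeeping. If $\hat p$ is not regular, we first pass to a further finite cover that is regular (the normal core of $\hat p_*\pi_1(\widehat\Sigma)$). Lifts of a simplicial triangulation remain simplicial, so this is harmless. At the end we must check that the stated inequality set does not depend on which cover is used. We pull back $\Phi$ to $\hat\Phi=\Phi\circ\hat p$ on $\widehat E$; condition~\eqref{cond:S} holds for $\hat\Phi$ because each lifted triangle maps isomorphically onto a triangle of $\mathcal T$. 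For radii $r\in\mathbb R^V_{>0}$ we set $\hat r=r\circ\hat p$. The inner angles of each lifted triangle coincide with those of its image, so the local geometry around $\hat v$ is that around $\hat p(\hat v)$, and
\[
\widehat{\mathcal K}(\hat r)=\mathcal K(r)\circ\hat p .
\]

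\textbf{Necessity.} Let $K=\mathcal K(r)$. Then $\hat K=K\circ\hat p$ lies in the image of $\widehat{\mathcal K}$. Theorem~\ref{T-1-1} applied to $(\widehat{\mathcal T},\hat\Phi)$ gives exactly \eqref{eq:KAT-quasi} for every nonempty proper $\hat I\subset\widehat V$. The total-curvature condition on $\Sigma$ follows from Gauss--Bonnet, or equivalently by dividing the lifted identity or inequality by the degree $d$, since $\chi(\widehat\Sigma)=d\chi(\Sigma)$ and $\sum_{\hat v}\hat K_{\hat v}=d\sum_v K_v$.

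\textbf{Sufficiency.} Let $K$ satisfy the hypotheses. Then $\hat K=K\circ\hat p$ satisfies all the KAT inequalities on $\widehat{\mathcal T}$ together with the lifted total-curvature condition. Theorem~\ref{T-1-1} therefore gives $\hat r\in\mathbb R^{\widehat V}_{>0}$ with $\widehat{\mathcal K}(\hat r)=\hat K$. It remains to show that $\hat r$ is $G$-invariant. Each $g\in G$ acts by a simplicial automorphism of $\widehat{\mathcal T}$ preserving $\hat\Phi$, so $\widehat{\mathcal K}(\hat r\circ g)=\widehat{\mathcal K}(\hat r)\circ g=\hat K\circ g=\hat K$.

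We then invoke the rigidity part of the circle pattern theorem. This gives injectivity of $\widehat{\mathcal K}$ in the hyperbolic background, and injectivity up to global scaling in the Euclidean background; it follows from the monotonicity in Lemma~\ref{L-2-4} via the strictly convex variational functional. Hence $\hat r\circ g=\hat r$ in the hyperbolic case, and $\hat r\circ g=\lambda_g\hat r$ in the Euclidean case. In the Euclidean case $g\mapsto\lambda_g$ is a homomorphism from the finite group $G$ to $\mathbb R_{>0}$, so $\lambda_g=1$. Thus $\hat r=r\circ\hat p$ for some $r\in\mathbb R^V_{>0}$, and $\mathcal K(r)=K$.

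\textbf{Main obstacle.} We expect the main work to be justifying the rigidity statement rigorously, since Theorem~\ref{T-1-1} as quoted only describes the image. If rigidity is not explicitly available, we would prove it directly. Use the closed $1$-form $\sum_v(K_v-\bar K_v)\,du_v$ with $u=\ln r$, or $u=\ln\tanh(r/2)$ in the hyperbolic case; its potential is strictly convex on the relevant domain by Lemma~\ref{L-2-4}. Alternatively, avoid rigidity by averaging: take $G$-symmetric critical points of the convex functional on $\widehat V$. The functional is $G$-invariant, so its unique minimizer (modulo scaling) is $G$-invariant. A second point to check carefully is that the identification $\widehat{\mathcal K}(r\circ\hat p)=\mathcal K(r)\circ\hat p$ correctly handles loops and multiple edges downstairs, where a single triangle may contribute several corners at the same vertex $v$. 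This works because the corners at $v$ biject with the corners at any fixed lift $\hat v$.
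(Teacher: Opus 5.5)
Your proposal is correct and follows essentially the paper's argument: you lift radii and angles to the simplicial cover, apply Theorem~\ref{T-1-1} there, and use rigidity together with deck-group equivariance to show the solution is $G$-invariant and therefore descends; your pointwise identity $\widehat{\mathcal K}(r\circ\hat p)=\mathcal K(r)\circ\hat p$ is exactly what underlies the paper's averaged formula~\eqref{eq:K-pull-push}. Your passage to the normal core is more careful than the paper, which tacitly treats $\hat p$ as regular, and the check you defer is immediate: necessity holds on any cover, and for sufficiency you pull the solution $\hat r$ back along $q\colon\check\Sigma\to\widehat\Sigma$ to get $\check{\mathcal K}(\hat r\circ q)=K\circ\hat p\circ q$ (where $\check{\mathcal K}$ is the curvature map of the lift to the normal core $\check\Sigma$), and your $\check G$-invariance argument then applies verbatim.
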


Here $\operatorname{Lk}(\hat I)$ and $\widehat\Sigma(\hat I)$ are defined in the same way as before, but with
respect to the lifted simplicial triangulation $\hat p^*\mathcal{T}$. Since every quasi-simplicial triangulation of
the sphere is already simplicial, Theorem~\ref{T-1-2} extends the Circle Pattern
Theorem mainly for closed surfaces of genus $g>0$.

The paper is organized as follows. Section~\ref{S-2} recalls the necessary background on three-circle configurations and gives a precise description of quasi-simplicial triangulations in the language of Delta complexes. Section~\ref{S-3} contains the core of the proof: we introduce the pullback–pushforward formalism, establish the relation between curvatures on the base and on the cover, prove Theorem~\ref{T-1-2}, and finally explain (Section~\ref{sec:why-covering}) why a direct approach without passing to a cover fails to capture the full set of constraints.

\section{Preliminaries on circle configurations and triangulations}\label{S-2}

\subsection{Three-circle configurations}

The geometric realization of a single triangle in a circle pattern depends on the existence and properties of a configuration of three mutually intersecting circles. In this subsection, we collect the essential lemmas concerning such three-circle configurations in both Euclidean and hyperbolic backgrounds.

\begin{lemma}\label{L-2-1}
There exists a configuration of three mutually intersecting circles in both Euclidean and hyperbolic background, unique up to isometry, having radii $r_i,r_j,r_k$  and meeting in exterior intersection angles $\Phi_i, \Phi_j, \Phi_k \in [0,\pi)$ for any three positive numbers $r_i,r_j,r_k$ if and only if $\Phi_i, \Phi_j, \Phi_k$ satisfy
\begin{equation}\label{cond:W}
\Phi_i+\Phi_j+\Phi_k\ \leq \ \pi
\;\;\;\;
\text{or}
\;\;\;\;
\Phi_i+\Phi_j <\pi+\Phi_k,\;\;\Phi_j+\Phi_k < \pi+\Phi_i,\;\;\Phi_k+\Phi_i < \pi+\Phi_j.
\end{equation}
\end{lemma}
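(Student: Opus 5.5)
This lemma is classical (it goes back to Thurston's notes and was made precise by Pepa--Popelensky and Zhou). I would prove it by reducing the existence of the three-circle configuration to the existence of a non-degenerate triangle whose vertices are the three centers.

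\emph{Step 1: reduce to triangle inequalities.} Two circles of radii $r_i,r_j$ meeting at exterior angle $\Phi_k$ have center distance determined by the law of cosines:
\[
l_k^2=r_i^2+r_j^2+2r_ir_j\cos\Phi_k
\]
in the Euclidean case, and
\[
\cosh l_k=\cosh r_i\cosh r_j+\sinh r_i\sinh r_j\cos\Phi_k
\]
in the hyperbolic case. Since $\Phi_k<\pi$, we have $l_k>|r_i-r_j|$, so the circles genuinely intersect. A configuration therefore exists, and is then unique up to isometry by SSS congruence, exactly when $l_i,l_j,l_k$ satisfy the strict triangle inequalities.

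\emph{Step 2: show the triangle inequalities under~\eqref{cond:W}.} I would fix the angles, normalize, and study
\[
F=l_i+l_j-l_k
\]
as a function of the radii. In the Euclidean case, squaring and simplifying turns $l_i+l_j>l_k$ into a polynomial inequality in $r_i,r_j,r_k$. Using scaling invariance, I would set $r_k=1$ and check positivity on the boundary and the asymptotic regimes. The limits $r_j\to0$ or $r_i\to\infty$ and so on reduce, after expansion, to conditions of the form
\[
\cos\Phi_\ast+\cos(\Phi_\cdot+\Phi_\cdot)>0\ \text{or}\ \ge 0,
\]
which are exactly what~\eqref{cond:W} encodes. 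Thurston's cleaner route is an alternative. Construct the configuration directly: place circles $C_i$ and $C_j$ at angle $\Phi_k$, and look for a third circle of radius $r_k$ meeting both at the prescribed angles. Its center lies on the intersection of two curves (circles of radius $l_i$ and $l_j$ about the centers). These two curves intersect iff the triangle inequality holds. A continuity/degree argument then shows intersection for all radii iff no degeneration occurs in the limits $r\to0,\infty$. Those limits are governed by the angle sum $\le\pi$ case or by the pairwise inequalities in~\eqref{cond:W}. For the hyperbolic case I would use the same argument with the hyperbolic law of cosines. Alternatively, I would note that small radii reduce to the Euclidean case, while large radii give horocyclic limits whose conditions coincide with the Euclidean ones.

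\emph{Step 3: necessity.} If~\eqref{cond:W} fails, then $\Phi_i+\Phi_j+\Phi_k>\pi$ and, say, $\Phi_i+\Phi_j\ge\pi+\Phi_k$. In that case I would exhibit radii with a degenerate triangle. For instance, take $r_k\to0$ relative to $r_i=r_j$: to first order $l_k$ equals $l_i+l_j$ or exceeds it, and this violates the strict inequality for suitable radii.

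The main obstacle is Step 2: proving positivity of $l_i+l_j-l_k$ for all radii, rather than just in the limits. I would first try to show that the infimum of the normalized quantity over the radii is attained only in boundary limits, by an explicit discriminant computation in the Euclidean case. Failing that, I would simply cite Zhou~\cite{Zhou17} and Pepa--Popelensky~\cite{PP17}, where this exact statement is proved.
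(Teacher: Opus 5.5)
\textbf{The gap is in your Step~3.} This is exactly the part the paper proves itself: Euclidean necessity. The hyperbolic case is cited from \cite{Zhou17} and Euclidean sufficiency from \cite{ZZ24}, much as you propose to cite for Step~2. The regime you pick does not degenerate. With $r_i=r_j=r$ and $r_k=\epsilon\to0$ one has $l_i,l_j\to r$. But $l_k=\sqrt{2r^2(1+\cos\Phi_k)}=2r\cos(\Phi_k/2)$ does not depend on $\epsilon$, so $l_i+l_j-l_k\to 2r\bigl(1-\cos(\Phi_k/2)\bigr)>0$ whenever $\Phi_k>0$. The limit is a non-degenerate isosceles triangle, and the first-order term $\epsilon(\cos\Phi_i+\cos\Phi_j)$ cannot close a zeroth-order gap. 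Your claim that $l_k$ equals or exceeds $l_i+l_j$ to first order is true only for $\Phi_k=0$.

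For a concrete example, take $\Phi_k=\pi/3$ and $\Phi_i=\Phi_j=5\pi/6$, so \eqref{cond:W} fails, and set $r_i=r_j=1$. Then $l_i+l_j>l_k$ is equivalent to $4r_k^2-4\sqrt3\,r_k+1>0$. This holds for small $r_k$ and fails exactly for $r_k\in[(\sqrt3-\sqrt2)/2,(\sqrt3+\sqrt2)/2]$. So the violation lives at an intermediate ratio of radii, not in your limit. This is also why the ``check the limits $r\to0,\infty$'' heuristic of your Step~2 cannot replace an actual argument.

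\textbf{How the paper produces a finite-radius witness.} It uses the polynomial $E(r_i,r_j,r_k)$, whose positivity is equivalent to the triangle inequalities. It sets $r_k=1$, $r_i=r_j=t$, and rewrites $E(t,t,1)\le0$ as $f(t)=At^2+Bt+C\ge0$, where $c_s=\cos\Phi_s$ and
\[
A=c_k^2-1<0,\qquad B=-2(1+c_k)(c_i+c_j),\qquad C=(c_i-c_j)^2-2(1+c_k).
\]
The discriminant equals $8(1+c_k)\bigl(c_k+\cos(\Phi_i+\Phi_j)\bigr)\bigl(c_k+\cos(\Phi_i-\Phi_j)\bigr)$, which is $\ge0$ when $\Phi_i+\Phi_j\ge\pi+\Phi_k$. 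Failure of \eqref{cond:W} forces $\Phi_i+\Phi_j>\pi$, hence $c_i+c_j<0$, so the vertex $t_{\mathrm v}=-(c_i+c_j)/(1-c_k)$ is positive. Thus $f(t_{\mathrm v})\ge0$ gives radii with a degenerate or impossible triangle. This covers uniformly the boundary case $\Phi_i+\Phi_j=\pi+\Phi_k$, where $f(t_{\mathrm v})=0$.

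If you prefer asymptotics, the direction that actually degenerates is $r_i\to\infty$, equivalently $r_j,r_k\to0$ together at a suitable fixed ratio. There $l_j-l_k\to r_k\cos\Phi_j-r_j\cos\Phi_k$. The limiting inequality $|l_j-l_k|<l_i$ then holds for all ratios only if $\cos\Phi_i+\cos(\Phi_j-\Phi_k)>0$, and this fails when $\Phi_i+\Phi_j\ge\pi+\Phi_k$. However, in the boundary case this route yields only a degenerate limit, so you would still need a finite-radius argument like the paper's.
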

\begin{proof}
For the hyperbolic background, this was proved in~\cite{Zhou17}, while for the Euclidean background, the sufficiency was established in~\cite{ZZ24}. We only establish the necessity part for the Euclidean case. Consider the quantity
\begin{align*}
  E(r_i,r_j,r_k)=&4r_i^2r_j^2(1-c_k^2)+4r_j^2r_k^2(1-c_i^2)+4r_k^2r_i^2(1-c_j^2)\\
  &+8r_ir_jr_k\bigl(r_i(c_i+c_jc_k)+r_j(c_j+c_kc_i)+r_k(c_k+c_ic_j)\bigr),
\end{align*}
whose positivity is equivalent to the
triangle inequality for the three lengths $l_{ij},l_{jk},l_{ki}$. Here, $c_s = \cos\Phi_s$ for $s\in\{i,j,k\}$.

Normalizing by $r_k=1$ and $r_i=r_j=t>0$, the condition $E\le 0$ reduces to a quadratic inequality $f(t)=A t^2 + B t + C \ge 0$, where
\[
A = c_k^2 - 1,\quad B = -2(1+c_k)(c_i+c_j),\quad C = (c_i-c_j)^2 - 2(1+c_k).
\]
Since $c_k \neq 1$ and $|c_k|<1$ in the setting of interest, one has $A<0$. A direct computation gives $\Delta = B^2 - 4AC = 8(1+c_k)\bigl((c_ic_j+c_k)^2 - (1-c_i^2)(1-c_j^2)\bigr) \ge 0$. Hence the parabola $f(t)$ opens downward and has real roots, attaining its maximum at
\[
t_{\mathrm{v}} = -\frac{B}{2A} = -\frac{(1+c_k)(c_i+c_j)}{1-c_k^2}.
\]
If condition~\eqref{cond:W} fails, then $\Phi_i+\Phi_j > \pi$, which yields $c_i+c_j < 0$ and thus $t_{\mathrm{v}} > 0$. Consequently, $f(t_{\mathrm{v}}) \ge 0$, so there exists $t>0$ with $f(t)\ge 0$, implying $E(t,t,1)\le 0$ and violation of the triangle inequality. This completes the proof.
\end{proof}

\begin{remark}\label{rem:S-implies-W}
Condition~\eqref{cond:S} strictly implies condition~\eqref{cond:W}. This follows from spherical trigonometry: the angles $\Phi_i,\Phi_j,\Phi_k$ can be interpreted as side lengths of a spherical triangle, and the inequality~\eqref{cond:S} is equivalent to the spherical triangle inequality. A detailed proof can be found in~\cite[Lemma~2.5]{Zhou17}. Consequently, under the assumption~\eqref{cond:S}, for \emph{any} triple of positive radii $(r_i, r_j, r_k)$ the three circles form a non-degenerate geometric triangle.
\end{remark}

\begin{figure}[htbp]
  \centering
  \input{3f}  
  \caption{A three-circle configuration with intersection angles $\Phi_i,\Phi_j,\Phi_k$ and corresponding inner angles $\theta_i,\theta_j,\theta_k$ of the triangle of centers.}
\end{figure}
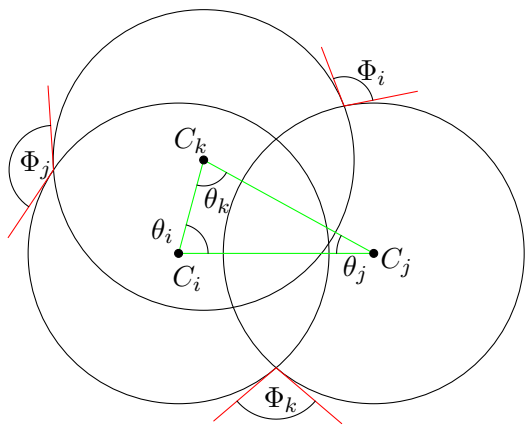

As shown in Figure~1, the geometric centers of the three mutually intersecting circles form a triangle. Let $\theta_i,\theta_j,\theta_k$ denote the corresponding inner angles of this triangle of centers, as indicated. The following limit relations hold in both the Euclidean and hyperbolic backgrounds.

\begin{lemma}[{\cite[Lemma~2.8]{Zhou17}}]\label{L-2-3}
Let $\Phi_i,\Phi_j,\Phi_k$ be as above. Then
\begin{equation*}
\displaystyle{\lim_{r_i\to+\infty}\theta_i\,=\,0,}
\end{equation*}
\begin{equation*}
\displaystyle{\lim_{(r_i,r_j,r_k)\to(0,a,b)}\theta_i\,=\,\pi-\Phi_i,}
\end{equation*}
\begin{equation*}
\displaystyle{\lim_{(r_i,r_j,r_k)\to(0,0,c)}\theta_i+\theta_j\,=\,\pi,}
\end{equation*}
\begin{equation*}
\displaystyle{\lim_{(r_i,r_j,r_k)\to(0,0,0)}\theta_i+\theta_j+\theta_k\,=\,\pi,}
\end{equation*}
where $a,b,c$ are positive constants.
\end{lemma}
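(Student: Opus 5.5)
The statement is Lemma~\ref{L-2-3}, which is quoted from \cite[Lemma~2.8]{Zhou17}; if one wants to reprove it, the plan is to express everything through the cosine law for the triangle of centers. By Remark~\ref{rem:S-implies-W} this triangle is non-degenerate for every triple of positive radii. In the Euclidean background I will use the convention that $\Phi_k$ sits on the edge $ij$, and similarly for the other edges. The side lengths are then
\[
l_{ij}^2=r_i^2+r_j^2+2r_ir_j\cos\Phi_k ,
\]
and cyclically. The angle is recovered from $\cos\theta_i=(l_{ij}^2+l_{ik}^2-l_{jk}^2)/(2l_{ij}l_{ik})$. In the hyperbolic background I will use the analogous formulas
\[
\cosh l_{ij}=\cosh r_i\cosh r_j+\sinh r_i\sinh r_j\cos\Phi_k
\]
together with the hyperbolic cosine law. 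Each of the four limits then becomes a limit computation in these explicit formulas.

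\emph{Step 1: $r_i\to+\infty$.} I keep $r_j,r_k$ in a compact subset of $(0,\infty)$; this is the setting in which the lemma is used. Then $l_{jk}$ is bounded. Since $1+\cos\Phi\ge$ const $>0$ for $\Phi\in[0,\pi)$, both $l_{ij}$ and $l_{ik}$ grow at least like $r_i\sqrt{1-\cos^2\Phi}$ in the Euclidean case, and like $r_i$ in the hyperbolic case. By the law of sines, $\sin\theta_i\le l_{jk}/l_{ij}\to0$ in the Euclidean case, and $\sinh$-versions give the same conclusion in the hyperbolic case. Moreover $\theta_i$ is opposite the shortest side, so it cannot tend to $\pi$. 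Hence $\theta_i\to0$.

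\emph{Step 2: $(r_i,r_j,r_k)\to(0,a,b)$.} Here $l_{ij}\to a$ and $l_{ik}\to b$, while $l_{jk}$ converges to the value given by the formula with $\cos\Phi_i$. Substituting into the cosine law gives
\[
\cos\theta_i\to\frac{a^2+b^2-(a^2+b^2+2ab\cos\Phi_i)}{2ab}=-\cos\Phi_i,
\]
so $\theta_i\to\pi-\Phi_i$. The hyperbolic version is identical after substituting into the hyperbolic cosine law, because $\cosh l_{jk}$ converges to the corresponding expression in $a,b,\Phi_i$.

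\emph{Steps 3 and 4.} If $r_i,r_j\to0$ and $r_k\to c$, then $l_{ij}\to0$ while $l_{ik},l_{jk}\to c$. The angle opposite the vanishing side satisfies $\theta_k\to0$. The angle sum is $\pi$ in the Euclidean case; in the hyperbolic case it equals $\pi$ minus the area, and the area tends to $0$ since the triangle collapses onto a segment. Hence $\theta_i+\theta_j\to\pi$. If all three radii tend to $0$, all sides tend to $0$, so the hyperbolic area tends to $0$ and again $\theta_i+\theta_j+\theta_k\to\pi$; in the Euclidean case this is trivial.

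The main obstacle is the hyperbolic case of Step 1 and the uniformity of the limits in the remaining variables. For Step 1 I would rewrite $\cosh l_{ij}$ asymptotically as $e^{r_i}\bigl(\cosh r_j+\sinh r_j\cos\Phi_k\bigr)/2$, which tends to infinity. I would then apply the hyperbolic law of sines, $\sin\theta_i=\sinh l_{jk}\sin\theta_j/\sinh l_{ij}$, to get $\theta_i\to0$.
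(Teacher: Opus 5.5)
The paper gives no proof of this lemma; it imports it from \cite[Lemma~2.8]{Zhou17}. Your direct computation with the cosine and sine laws is therefore a self-contained substitute that treats both backgrounds in parallel, and it is correct.

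Step~2 is right. Your convention puts $\Phi_i$ on the edge $jk$, which matches the figure and the way $\pi-\Phi(e)$ enters $\operatorname{Lk}(I)$. Steps~3--4 correctly reduce to two facts: the angle opposite a vanishing side tends to $0$, and the hyperbolic area tends to $0$. To make ``collapses onto a segment'' precise, note that the triangle lies in the sector at vertex $k$ with angle $\theta_k$ and radius $R=\max(l_{ik},l_{jk})$. That sector has area $\theta_k(\cosh R-1)\to0$.

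Step~1 needs two small repairs:
\begin{itemize}
\item The bound $l_{ij}\gtrsim r_i\sqrt{1-\cos^2\Phi_k}=r_i\sin\Phi_k$ is vacuous at $\Phi_k=0$. That is the tangency case, and it is allowed. Use instead $l_{ij}^2-(r_i-r_j)^2=2r_ir_j(1+\cos\Phi_k)>0$, and in the hyperbolic case $\cosh l_{ij}\ge\cosh(r_i-r_j)$.
\item The hyperbolic law of sines reads $\sin\theta_i=\sinh l_{jk}\sin\theta_k/\sinh l_{ij}$, since the angle opposite $l_{ij}$ is $\theta_k$, not $\theta_j$. The bound $\sin\theta_i\le\sinh l_{jk}/\sinh l_{ij}$ that you actually use is unaffected.
\end{itemize}

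On your reading of the first limit: keeping $r_j,r_k$ in a compact set is the only interpretation valid in the Euclidean background. There $\theta_i$ is invariant under $r\mapsto tr$, so no uniform statement can hold. Your version is thus a legitimate proof of the lemma as stated.

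In the hyperbolic background, however, the limit is uniform in $(r_j,r_k)\in\mathbb{R}_{>0}^2$. That stronger form is what properness arguments for the hyperbolic curvature map need, because neighbouring radii may degenerate at the same time. It follows from your setup with one more identity:
\[
\cosh l_{ij}=\tfrac{1+\cos\Phi_k}{2}\cosh(r_i+r_j)+\tfrac{1-\cos\Phi_k}{2}\cosh(r_i-r_j).
\]
This gives $r_i+r_j-C\le l_{ij}\le r_i+r_j$ with $C=\log\frac{4}{1+\cos\Phi_k}$, and likewise for the other two sides. Hence $l_{ij}+l_{ik}-l_{jk}\ge 2r_i-C'$. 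The cosine law then yields
\[
1-\cos\theta_i=\frac{\cosh l_{jk}-\cosh(l_{ij}-l_{ik})}{\sinh l_{ij}\,\sinh l_{ik}}\le\frac{e^{l_{jk}}}{\sinh l_{ij}\,\sinh l_{ik}}\le 16\,e^{-(2r_i-C')}
\]
once $r_i$ is large. This tends to $0$ independently of $r_j$ and $r_k$.
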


The monotonicity of the inner angles with respect to the radii is a cornerstone of the variational approach to circle patterns. In both the Euclidean and hyperbolic backgrounds, one has the following characterization.

\begin{lemma}[{\cite[Lemma~2.6]{Xu} and \cite[Lemma~4.1]{Zhou17}}]\label{L-2-4}
For any three positive radii $(r_i,r_j,r_k)$, the inner angles satisfy the monotonicity properties
\[
\frac{\partial\theta_i}{\partial r_i} < 0,\quad 
\frac{\partial\theta_j}{\partial r_i} \ge 0,\quad 
\frac{\partial(\theta_i+\theta_j+\theta_k)}{\partial r_i} < 0,
\]
if and only if the three intersection angles $\Phi_i,\Phi_j,\Phi_k$ satisfy
\[
\cos\Phi_i + \cos\Phi_j\cos\Phi_k \ge 0,
\]
and its cyclic permutations. These inequalities are precisely the restriction of condition~\eqref{cond:S} to a single triangle.
\end{lemma}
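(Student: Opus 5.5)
The plan is to compute the partial derivatives explicitly through the lengths of the triangle of centers, in the spirit of \cite{Zhou17,Xu}, and read off their signs. In the Euclidean background write $c_s=\cos\Phi_s$ and
\[
l_{ij}^2=r_i^2+r_j^2+2r_ir_jc_k,
\]
and similarly for $l_{jk}$ and $l_{ki}$. By Remark~\ref{rem:S-implies-W} these three lengths always form a non-degenerate triangle, so the inner angles are smooth functions of $(r_i,r_j,r_k)$. The chain rule gives
\[
\frac{\partial\theta_\ast}{\partial r_i}=\sum_{l}\frac{\partial\theta_\ast}{\partial l}\,\frac{\partial l}{\partial r_i}.
\]
Here only $l_{ij}$ and $l_{ik}$ depend on $r_i$, with
\[
\frac{\partial l_{ij}}{\partial r_i}=\frac{r_i+r_jc_k}{l_{ij}},\qquad \frac{\partial l_{ik}}{\partial r_i}=\frac{r_i+r_kc_j}{l_{ik}}.
\]
For the angle derivatives with respect to lengths I would use the standard formulas: $\partial\theta_i/\partial l_{jk}=l_{jk}/(2A)$ and $\partial\theta_i/\partial l_{ij}=-l_{jk}\cos\theta_j/(2A)$, where $A$ is the area.

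Substituting these and simplifying with the law of cosines, the target is a closed formula of the form
\[
\frac{\partial\theta_j}{\partial r_i}=\frac{r_j}{2A\,l_{ij}^2}\Bigl(r_i r_j(1-c_k^2)\cdots+r_k r_i(c_j+c_ic_k)+r_k r_j(c_i+c_jc_k)\Bigr).
\]
I expect every coefficient to be either manifestly nonnegative (multiples of $1-c_k^2$) or one of the expressions $c_s+c_tc_u$. Under~\eqref{cond:S} this gives $\partial\theta_j/\partial r_i\ge0$ at once. Since $\theta_i+\theta_j+\theta_k=\pi$ in the Euclidean case, we get
\[
\frac{\partial\theta_i}{\partial r_i}=-\frac{\partial\theta_j}{\partial r_i}-\frac{\partial\theta_k}{\partial r_i}\le 0.
\]
Strictness follows because the $(1-c_k^2)$-term is strictly positive. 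In the Euclidean case the third inequality degenerates to equality, and the statement should be read with "$<0$" for the angle sum only in the hyperbolic background. In the hyperbolic background the same scheme applies with
\[
\cosh l_{ij}=\cosh r_i\cosh r_j+\sinh r_i\sinh r_j\,c_k
\]
and the hyperbolic cosine law. The sign of $\partial\theta_j/\partial r_i$ is again governed by the $c_s+c_tc_u$ terms. For the angle sum I would use Gauss--Bonnet, $\theta_i+\theta_j+\theta_k=\pi-\operatorname{Area}$, and show that the area strictly increases in $r_i$, for instance by computing $\partial\operatorname{Area}/\partial r_i$ from the same formulas.

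For the converse, suppose~\eqref{cond:S} fails, say $c_k+c_ic_j<0$. Choose radii degenerating so that the $(c_k+c_ic_j)$-term dominates the expression for the relevant cross derivative: for example let $r_i,r_j\to\infty$ with $r_k$ fixed, or use a scaling regime guided by Lemma~\ref{L-2-3}. Then some cross derivative becomes negative, so the monotonicity fails. Here I must take care that degenerate triangles do not occur, since~\eqref{cond:W} may also fail once~\eqref{cond:S} does.

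The main obstacle is the algebraic simplification that collapses the chain-rule expression into a sum of terms whose signs are controlled exactly by $c_s+c_tc_u$ and $1-c_s^2$. I would first carry this out in the Euclidean case by clearing denominators and grouping by monomials in $r_i,r_j,r_k$, and then adapt it to the hyperbolic case by substituting hyperbolic functions of the radii.
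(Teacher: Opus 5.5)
The paper gives no proof of this lemma. It quotes it from \cite{Xu} and \cite{Zhou17}, which argue by exactly this kind of explicit differentiation, so the sufficiency half of your plan follows the standard route. You are also right that in the Euclidean background the third inequality can only hold as an equality.

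\textbf{The converse.} This is the genuine gap, and it traces back to your guessed formula, in which the two mixed coefficients are interchanged. Carrying out the computation gives
\[
\frac{\partial\theta_j}{\partial r_i}=\frac{r_j}{2A\,l_{ij}^2}\Bigl(r_ir_j(1-c_k^2)+r_jr_k(c_j+c_ic_k)+r_ir_k(c_i+c_jc_k)\Bigr).
\]
In the hyperbolic case the same scheme yields, up to positive factors, the bracket $\sinh r_i\sinh r_j\cosh r_k(1-c_k^2)+\cosh r_i\sinh r_j\sinh r_k(c_j+c_ic_k)+\sinh r_i\cosh r_j\sinh r_k(c_i+c_jc_k)$, so your sign reading there is fine.

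Consequently $c_k+c_ic_j$ appears only in $\partial\theta_k/\partial r_i$ and $\partial\theta_k/\partial r_j$, with coefficients $r_jr_k$ and $r_ir_k$, not $r_ir_j$ as your version suggests. In your regime $r_i,r_j\to\infty$ with $r_k$ fixed, the $r_ir_j$-terms dominate every cross derivative. Those terms carry $1-c_k^2$, $c_i+c_jc_k$ and $c_j+c_ic_k$. For $c_i=c_j=0.1$, $c_k=-0.9$ (where even \eqref{cond:W} holds) all of these are positive, so no negative derivative appears.

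The degeneration that works is shrinking a radius. Since $c_k+c_ic_j<0$, the angles $\Phi_i$ and $\Phi_j$ cannot both vanish; say $\Phi_i>0$. Fix $r_j,r_k$ and let $r_i\to0$. The side lengths tend to $r_j$, $r_k$, $l_{jk}$, which by the cosine law form a non-degenerate triangle with angle $\pi-\Phi_i$ at $i$ (cf.\ Lemma~\ref{L-2-3}). So the configuration exists for all small $r_i$ whether or not \eqref{cond:W} holds, which also settles your worry about degenerate triangles. Write
\[
\frac{\partial\theta_k}{\partial r_i}=\frac{\partial\theta_k}{\partial l_{ij}}\Bigl(\frac{\partial l_{ij}}{\partial r_i}-\cos\theta_i\,\frac{\partial l_{ik}}{\partial r_i}\Bigr),
\]
and use $\partial l_{ij}/\partial r_i\to c_k$, $\partial l_{ik}/\partial r_i\to c_j$ and $\cos\theta_i\to -c_i$, all valid in both backgrounds. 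This gives $\partial\theta_k/\partial r_i\to(\partial\theta_k/\partial l_{ij})\,(c_k+c_ic_j)<0$, so monotonicity fails for small $r_i$.

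\textbf{Strictness.} Your argument breaks down when $\Phi_k=0$, which includes the classical tangency case, since then $1-c_k^2=0$. Indeed $\Phi_k=0$, $\Phi_i+\Phi_j=\pi$ satisfies \eqref{cond:S} and gives $\partial\theta_j/\partial r_i\equiv0$. Argue with the sum instead:
\begin{itemize}
\item if $\Phi_k>0$, then $\partial\theta_j/\partial r_i>0$;
\item if $\Phi_j>0$, then $\partial\theta_k/\partial r_i>0$, because its bracket contains $r_ir_k(1-c_j^2)$;
\item if $\Phi_j=\Phi_k=0$, then $c_j+c_ic_k=1+c_i>0$.
\end{itemize}

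\textbf{Hyperbolic angle sum.} In the hyperbolic background, $\partial\theta_i/\partial r_i<0$ follows from the cross derivatives only once $\partial\operatorname{Area}/\partial r_i>0$ is established. That is precisely the step you leave as ``compute from the same formulas'', and it does not fall out of the cross-derivative formula. The area of a hyperbolic triangle is not monotone in a single side, and $\partial l_{ij}/\partial r_i$, $\partial l_{ik}/\partial r_i$ can be negative for obtuse $\Phi$. So a separate computation that uses \eqref{cond:S} is required; the paper attributes this inequality to \cite[Lemma~4.1]{Zhou17}.
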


\subsection{Quasi-simplicial triangulations}
Let $\Sigma$ be a closed orientable surface. The following definition of a general triangulation allowing loops and multiple edges is taken from~\cite{BW25}.
\begin{definition}
A \emph{triangulation} $\sT$ of a surface $\Sigma$ consists of a triple $(V,E,F)$ such that:
\begin{itemize}
\item $V$ is a discrete subset of $\Sigma$, whose elements are called vertices;
\item $E$ is a collection of embedded arcs or loops whose endpoints lie in $V$, called edges;
\item $F$ is the set of connected components of $\Sigma \setminus \bigcup_{e\in E} e$, called faces.
\end{itemize}
We further require that:
\begin{itemize}
\item distinct edges intersect only at their endpoints;
\item every vertex is incident to at least three edges (its \emph{degree} $\deg(v)\ge 3$);
\item each face is a topological disk whose boundary consists of exactly three edges.
\end{itemize}
\end{definition}

We regard triangulations as combinatorial data, hence defined up to isotopy of $\Sigma$. Edges are allowed to be loops or parallel (i.e., sharing the same endpoints), so the $1$-skeleton $\Sigma^{(1)}=(V,E)$ is generally a pseudograph. In particular, such a triangulation does not necessarily induce a simplicial complex structure on $\Sigma$.
\begin{definition}
A \emph{quasi-simplicial} triangulation of $\Sigma$ is a triangulation $\mathcal{T}$ such that its lift to the universal covering space $\widetilde{\Sigma}$ is a simplicial triangulation (i.e., contains no loops and no multiple edges).
\end{definition}

A fundamental property of quasi-simplicial triangulations is that they can always be ``unwrapped'' to a simplicial triangulation by passing to a suitable finite covering. The following lemma, due to Bonsante and Wolf, makes this precise.

\begin{lemma}[{\cite[Lemma~2.16]{BW25}}]\label{lem:quasi-simplicial}
For a quasi-simplicial triangulation $\mathcal{T}$ of $\Sigma$, there exists a finite covering $\hp \colon \widehat{\S} \longrightarrow \S$ such that $\hp^*\mathcal{T}$ is a simplicial triangulation.
\end{lemma}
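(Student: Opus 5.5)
The plan is to pass through the universal cover and use residual finiteness of $\pi_1(\Sigma)$. Let $\widetilde{\mathcal{T}}$ be the lift of $\mathcal{T}$ to $\widetilde{\Sigma}$. It is simplicial by assumption, and $\Gamma=\pi_1(\Sigma)$ acts on it freely by deck transformations. For any subgroup $H\le\Gamma$ of finite index, the covering $\widehat{\Sigma}_H=\widetilde{\Sigma}/H\to\Sigma$ is finite, and $\hat p^*\mathcal{T}$ is the quotient $\widetilde{\mathcal{T}}/H$. So its vertices, edges and faces are the $H$-orbits of the vertices, edges and faces of $\widetilde{\mathcal{T}}$. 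The degree condition and the fact that faces are triangles are local properties and pass to the quotient automatically. The task is therefore to choose $H$ so that $\widetilde{\mathcal{T}}/H$ has no loops and no multiple edges.

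\textbf{Step 1: reduce both conditions to avoiding finitely many elements of $\Gamma$.} Fix one lift $\tilde v$ of each $v\in V$; call this finite set $\widetilde V_0$. Let $\widetilde E_0$ be the set of lifted edges incident to some vertex of $\widetilde V_0$. This set is finite because $\widetilde{\mathcal{T}}$ is locally finite, each vertex having the degree of its image.

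\emph{Loops.} A loop in $\widetilde{\mathcal{T}}/H$ is the image of an edge $\tilde e$ with endpoints $\tilde u\ne\tilde w$ (distinct because $\widetilde{\mathcal{T}}$ has no loops) such that $\tilde w=\gamma\tilde u$ for some $\gamma\in H$. After translating we may take $\tilde e\in\widetilde E_0$. Since the action is free, $\gamma$ is uniquely determined by the pair $(\tilde u,\tilde w)$, and $\gamma\neq 1$. This produces a finite set $S_{\mathrm{loop}}\subset\Gamma\setminus\{1\}$.

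\emph{Multiple edges.} Suppose two edges $\tilde e_1,\tilde e_2$ lie in different $H$-orbits but have the same pair of endpoint orbits. Translating, both start at the same $\tilde u\in\widetilde V_0$, so both lie in $\widetilde E_0$. Their other endpoints then satisfy $\tilde w_2=\gamma\tilde w_1$ with $\gamma\in H$; the case where the roles of the endpoints are swapped is handled the same way. We must have $\gamma\neq1$: otherwise $\tilde e_1,\tilde e_2$ would join the same two vertices of the simplicial complex $\widetilde{\mathcal{T}}$, hence coincide. Again $\gamma$ is determined by a pair of edges in the finite set $\widetilde E_0$, giving a finite set $S_{\mathrm{mult}}\subset\Gamma\setminus\{1\}$.

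Set $S=S_{\mathrm{loop}}\cup S_{\mathrm{mult}}$. By construction, if $H\cap S=\emptyset$ then $\widetilde{\mathcal{T}}/H$ is simplicial.

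\textbf{Step 2: find $H$.} Surface groups are finitely generated and residually finite; for instance, they are linear, being Fuchsian or $\mathbb{Z}^2$ (the sphere case is trivial). Hence for each $\gamma\in S$ there is a finite-index normal subgroup $N_\gamma\not\ni\gamma$. The intersection $H=\bigcap_{\gamma\in S}N_\gamma$ is a finite-index normal subgroup avoiding $S$. The corresponding regular covering $\hat p\colon\widehat\Sigma\to\Sigma$ is the required one.

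The main obstacle is the bookkeeping in Step 1. One has to check that every possible loop or multiple edge in the quotient really comes from a pair in the finite set $\widetilde E_0$, treating both orientations of an edge. One also has to use freeness of the action so that each such defect pins down a single nontrivial group element. Once this finite list is established, residual finiteness finishes the argument immediately.
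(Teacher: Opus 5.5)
Your argument is correct and is essentially the residual-finiteness argument the paper sketches in its introduction (the lemma itself is only cited from BW25); you make it precise by encoding every possible loop or multiple edge of $\widetilde{\mathcal T}/H$ as an element of a finite set $S$ of nontrivial deck transformations that $H$ must avoid. One small point: your ``after translating'' steps replace $\gamma$ by a conjugate $g\gamma g^{-1}$, so the claim that $H\cap S=\emptyset$ suffices holds only for normal $H$. This is harmless, since the $H$ you build in Step 2 is normal.
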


If the covering is chosen to be unramified (which can always be arranged), then $\widehat{\Sigma}$ is a closed orientable surface of genus $\hat g = \deg(\hat p)(g-1)+1$.

\subsection{Delta complex structure}
A quasi-simplicial triangulation can be viewed as a Delta complex, which we now describe. (We refer the reader to Hatcher~\cite[Section~2.1]{Hatcher} for the general theory of Delta complexes and simplicial homology, and to Friedman~\cite{Friedman12} for an elementary illustrated introduction to simplicial sets.)

Denote the triangulation by \(\mathcal{T}=(V,E,F)\), where \(V\) is the set of vertices, \(E\) the set of edges, and \(F\) the set of triangles (faces). We equip the edge set \(E\) with two face maps \(d_0, d_1: E \to V\) which assign to each edge its terminal and initial vertices, respectively (they may coincide, allowing loops). Similarly, the triangle set \(F\) is equipped with three face maps \(d_0, d_1, d_2: F \to E\) sending each triangle to the three edges opposite its zeroth, first, and second vertices (these edges are not required to be distinct). This reflects the intuitive idea that \(d_i\) (in any dimension where it makes sense) is the face map that omits the \(i\)-th vertex of a simplex. If one wished to be perfectly precise, one would write \(d^1_0,d^1_1\) for the face maps from \(E\) to \(V\), and \(d^2_0,d^2_1,d^2_2\) for those from \(F\) to \(E\). However, it is customary to omit the dimension superscript and write simply \(d_i\), since the context makes clear which level is intended (the map is defined whenever the dimension is at least \(i\)). We shall adopt this convention.

For a general Delta complex, these face maps are required only to satisfy the \emph{simplicial identities}: for any \(0\le i<j\le 2\) we have
\[
d_i\circ d_j = d_{j-1}\circ d_i
\]
(whenever the compositions make sense). That is, the diagram below commutes.
\[
\begin{tikzcd}[column sep=large, row sep=large]
F \ar[r, "d_j"] \ar[d, "d_i"'] & E \ar[d, "d_i"] \\
E \ar[r, "d_{j-1}"'] & V
\end{tikzcd}
\]

Let \(v_i(\triangle)\) denotes the \(i\)-th vertex of triangle \(\triangle\) (ordered according to the face maps \(d_i\); it can be obtained by composing the face maps: \(v_0=d_1\circ d_2,v_1=d_0\circ d_2,v_2=d_0\circ d_1\), but we simply assume such a well-defined ordered triple \(\big(v_0(\triangle),v_1(\triangle),v_2(\triangle)\big)\) is given).

We introduce the following notation for incidence relations:
\begin{itemize}
    \item For a vertex \(v\in V\) and an edge \(e\in E\), we write \(v\prec e\) if there exists a face map \(d_i:E\to V\) such that \(d_i(e)=v\) (i.e. \(v\) is an endpoint of \(e\)).
    \item For an edge \(e\in E\) and a triangle \(\triangle\in F\), we write \(e\prec\triangle\) if there exists a face map \(d_i:F\to E\) such that \(d_i(\triangle)=e\) (i.e. \(e\) is a side of \(\triangle\)).
    \item For a vertex \(v\in V\) and a triangle \(\triangle\in F\), we write \(v\prec\triangle\) if there exists an edge \(e\in E\) such that \(v\prec e\) and \(e\prec\triangle\). Equivalently, \(v\prec\triangle\) if and only if \(v=v_i(\triangle)\) for some \(i\in\{0,1,2\}\), i.e. \(v\) is a vertex of \(\triangle\).
\end{itemize}

In the case we are interested in, namely quasi-simplicial triangulations of closed surfaces, one further imposes that each edge is incident to exactly two triangles, i.e. $\#\{ \triangle \in F \mid e \prec \triangle \} = 2$ for every $e\in E$. We can define the geometric realization \(|\mathcal{T}|\) of \(\mathcal{T}\), which is homeomorphic to \(\Sigma\). Concretely, it is obtained as the quotient space
\[
|\mathcal{T}| = \left( V \times \Delta^0 \;\bigsqcup\; E\times \Delta^1 \;\bigsqcup\; F\times \Delta^2 \right) / \sim,
\]
where \(\sim\) is the equivalence relation defined by the following identifications:
\[
\begin{aligned}
&\text{For each } e\in E,\ i\in\{0,1\}: \quad (e, d^i(1)) \sim (d_i(e), 1);\\[4pt]
&\text{For each } \triangle\in F,\ i\in\{0,1,2\}: \quad (\triangle, d^i(s,1-s)) \sim (d_i(\triangle), (s,1-s)).
\end{aligned}
\]
Here \(\Delta^n\) denotes the standard \(n\)-simplex, and \(d^i: \Delta^{n-1} \hookrightarrow \Delta^n\) is the inclusion of the \(i\)-th face, defined by
\[
d^i(t_0,\dots,t_{n-1}) = (t_0,\dots,t_{i-1},0,t_{i+1},\dots,t_{n-1}).
\]

To the Delta complex \(\mathcal{T}\) we associate a chain complex \(C_*(\mathcal{T})\) with
\[
C_0=\mathbb{Z}\langle V\rangle ,\quad C_1=\mathbb{Z}\langle E\rangle ,\quad C_2=\mathbb{Z}\langle F\rangle ,
\]
and boundary maps \(\partial_n:C_n\to C_{n-1}\) defined on generators by
\[
\partial_1(e)=d_1(e)-d_0(e),\qquad 
\partial_2(\triangle)=d_0(\triangle)-d_1(\triangle)+d_2(\triangle),
\]
extended linearly. 

The simplicial identities \(d_i\circ d_j = d_{j-1}\circ d_i\) (for \(0\le i<j\le 2\)) give \(\partial_1\circ\partial_2 = 0\), yielding a chain complex
\[
0 \longrightarrow C_2 \xrightarrow{\partial_2} C_1 \xrightarrow{\partial_1} C_0 \longrightarrow 0.
\]

For a quasi-simplicial triangulation of a closed surface, as discussed above, each edge is incident to exactly two triangles. Hence for every \(e\in E\) there exist unique triangles \(\triangle,\triangle'\in F\) and indices \(i,j\in\{0,1,2\}\) such that \(d_i(\triangle)=e\) and \(d_j(\triangle')=e\). The orientability of \(\Sigma\) is encoded combinatorially by the existence of a function \(\varepsilon:F\to\{\pm\}\) satisfying for every such edge:
\[
\varepsilon(\triangle)\,(-1)^i + \varepsilon(\triangle')\,(-1)^j = 0.
\]
This condition is exactly equivalent to \(\partial_2\big(\sum_{\triangle\in F}\varepsilon(\triangle)\,\triangle\big)=0\). Hence the 2-chain \(\sum_{\triangle\in F}\varepsilon(\triangle)\,\triangle\) is a cycle, and it generates the second homology group \(H_2(\Sigma)\cong\mathbb{Z}\), representing the fundamental class of the surface.

\section{Passing to finite coverings}\label{S-3}

In this section we explain how the study of the curvature map on a quasi-simplicial
triangulation can be reduced to the corresponding problem on a suitable finite
covering triangulation. We denote by $\mathcal{K}\colon\mathbb{R}_{>0}^V\to\mathbb{R}^V$ the curvature map on the original triangulation $\mathcal{T}$, and by $\widehat{\mathcal{K}}$ the analogous curvature map on the lifted triangulation $\widehat{\mathcal{T}}$ (see Subsection~\ref{subsec:metric-to-curvature} for the precise definitions). The key point is that the basic curvature formula is
natural with respect to pullback/pushforward along the covering, so that
\emph{the range of} $\mathcal{K}$ on the original triangulation can be read
off from the range of $\widehat{\mathcal{K}}$ restricted to the pulled-back
radii.

We fix an intersection angle function
\[
\Phi : E \to [0,\pi),
\]
assigning to each edge $e\in E$ a non-negative angle $\Phi(e)$ which
specifies the intersection angle of the two circles centered at the
endpoints of $e$. In this section we work under the condition~\eqref{cond:S}.

\subsection{From circle packing metrics to discrete curvature} \label{subsec:metric-to-curvature}
Let \(\mathcal{T}=(V,E,F)\) be a quasi-simplicial triangulation of a closed orientable surface \(\Sigma\). For each edge \(e\in E\) we are given a fixed intersection angle \(\Phi(e)\in[0,\pi)\). A \emph{circle packing metric} on \(\mathcal{T}\) is a function
\[
r:V\longrightarrow\mathbb{R}_{>0},\qquad r=(r_v)_{v\in V}\in\mathbb{R}_{>0}^V,
\]
assigning a positive radius \(r_v\) to each vertex. For every edge \(e\in E\) we define its length \(l_e\) by the cosine law in the chosen background geometry:
\[
l_e=\begin{cases}
\sqrt{r_{d_0(e)}^2+r_{d_1(e)}^2+2r_{d_0(e)}r_{d_1(e)}\cos\Phi(e)}, & \text{Euclidean background},\\[4pt]
\operatorname{arccosh}\bigl(\cosh r_{d_0(e)}\cosh r_{d_1(e)}+\sinh r_{d_0(e)}\sinh r_{d_1(e)}\cos\Phi(e)\bigr), & \text{hyperbolic background}.
\end{cases}
\]
Thus each edge receives a geometric length depending smoothly on the radii. Now consider a triangle \(\triangle\in F\). Its three edges are \(d_0(\triangle),d_1(\triangle),d_2(\triangle)\in E\), so it determines a geometric triangle with side lengths \(l_{d_0(\triangle)},l_{d_1(\triangle)},l_{d_2(\triangle)}\). Since each geometric triangle is homeomorphic to standard 2-simplex $\Delta^2$ and each geometric edge is homeomorphic to standard 1-simplex $\Delta^1$ via centroid coordinates, we can perform identifications analogous to those in the geometric realization $|\sT|$. This yields a cone metric on the surface \S.

Inside a geometric triangle $\triangle$ we denote the interior angle at the vertex opposite edge \(d_i(\triangle)\) by \(\theta_i^\triangle(r)\). Concretely, if we set
\[
l_i=l_{d_i(\triangle)},\quad l_j=l_{d_j(\triangle)},\quad l_k=l_{d_k(\triangle)}\quad\text{with }\{i,j,k\}=\{0,1,2\},
\]
then the interior angle $\theta_i^\triangle(r)$ is given by the law of cosines:
\[
\theta_i^\triangle(r)=\begin{cases}
\arccos\!\left(\dfrac{l_j^2+l_k^2-l_i^2}{2l_jl_k}\right), & \text{Euclidean background},\\[6pt]
\arccos\!\left(\dfrac{\cosh l_j\cosh l_k-\cosh l_i}{\sinh l_j\sinh l_k}\right), & \text{hyperbolic background}.
\end{cases}
\]

For a vertex \(v\in V\) we need to collect all angles at \(v\) coming from the incident triangles. Since a triangle may contain the same vertex several times (as in the one-vertex triangulation of the torus), we must specify not only the triangle but also the position of the vertex within that triangle. Recall that \(v_i(\triangle)\) denotes the \(i\)-th vertex of triangle \(\triangle\). Then the discrete curvature at \(v\) is defined by
\begin{equation}
{K}_v=2\pi-\sum_{v_i(\triangle)=v}\theta_i^\triangle  ,
\end{equation}
where the sum is taken over all pairs $(\triangle,i)$ with $\triangle\in F$ and $i\in\{0,1,2\}$ such that the $i$-th vertex of $\triangle$ is $v$.
When \sT is simplicial, each triangle contains any given vertex at most once, so we may unambiguously write $\theta_v^\triangle$ for the interior angle at the vertex $v$ in the triangle $\triangle$. Then the curvature formula simplifies to
\begin{equation}
{K}_v=2\pi-\sum_{\triangle\succ v}\theta_v^\triangle .
\end{equation}

Thus we obtain the curvature map $\mathcal{K}\colon \mathbb{R}_{>0}^V \to \mathbb{R}^V$, $r \mapsto (K_v)_{v\in V}$.  The same construction applied to the lifted triangulation $\widehat{\mathcal{T}}$ yields the curvature map $\widehat{\mathcal{K}}\colon \mathbb{R}_{>0}^{\widehat{V}} \to \mathbb{R}^{\widehat{V}}$. 

The curvature map $\mathcal{K}$ is smooth. A circle packing metric $r$ is called \emph{flat} if \(\mathcal{K}(r)=0\). In this case the cone metric has no conical singularities, hence it is a complete Riemannian metric of constant curvature (Euclidean or hyperbolic, respectively) on \(\Sigma\).

\subsection{Pullback of radii and push-forward of curvature}
Let $\hat p \colon \widehat{\Sigma} \to \Sigma$ be a finite-sheeted covering, as given by Lemma~\ref{lem:quasi-simplicial}, whose restriction to the triangulation is a map $\hat p \colon \widehat{\mathcal{T}} \to \mathcal{T}$ with $\widehat{\mathcal{T}} = (\widehat{V},\widehat{E},\widehat{F})$ a simplicial triangulation.
The covering $\hat p$ sends vertices to vertices, edges to edges, and triangles to triangles, and restricts to a bijection on each triangle $\triangle\in \widehat{F}$ onto its image $\hat p(\triangle)\in F$. 

Moreover, $\hat p$ is compatible with the face maps: for every admissible index $i$, the relation $d_i \circ \hat p = \hat p \circ d_i$ holds wherever both sides are defined.  In other words, $\hat p$ preserves the entire incidence structure — it maps the $i$-th vertex of a lifted triangle to the $i$-th vertex of its image, the $i$-th edge to the $i$-th edge, and similarly for the boundary maps.  Consequently, $\hat p$ is a morphism of Delta complexes, respecting the combinatorial identifications that glue the simplices together.

We pull back the intersection angles to $\widehat{\mathcal{T}}$ by declaring that for each edge $\hat e\in\widehat{E}$,
\[
\Phi(\hat e) \coloneqq \Phi\bigl(\hat p(e)\bigr),
\]
so that $\Phi\colon\widehat{E}\to[0,\pi)$ also satisfies condition~\eqref{cond:S}. In the same spirit, for any circle packing metric $r\in\mathbb{R}_{>0}^V$ on $\mathcal{T}$ we define its pullback $\hat p^*r\in\mathbb{R}_{>0}^{\widehat{V}}$ by
\[
(\hat p^*r)_{\hat v} \coloneqq r_{\hat p(\hat v)},\qquad \hat v\in\widehat{V}.
\]

We henceforth consider only those radius assignments on $\widehat{\mathcal{T}}$ that are pullbacks from $\mathcal{T}$; that is, we restrict the curvature map to the subspace
\[
\hat p^\ast(\mathbb{R}_{>0}^V) \;\subset\; \mathbb{R}_{>0}^{\widehat{V}}.
\]

For each triangle $\hat{\triangle} \in \widehat{F}$ with image $ \hat p(\hat{\triangle})\in F$, the geometric construction on $\hat{\triangle}$ with radii $\hat p^\ast r$ and intersection angles ${\Phi}$ is identical to the geometric construction on $\hat p(\hat{\triangle})$ with radii $r$ and angles $\Phi$. Consequently, the interior angles satisfy
\[
{\theta}_{\hat{v}}^{\hat{\triangle}}(\hat p^\ast r)
  \;=\;
  \theta_{i}^{\hat p(\hat\triangle)}(r),\quad
  \hat{v}\prec \hat{\triangle},
\]
where $v_i(\hat\triangle)=\hat v$.

In particular, for a pulled-back circle packing metric $\hat p^\ast r$ for ${r}\in \mathbb{R}_{>0}^{{V}}$,
we have
\begin{equation}\label{eq:Khat-on-pullback}
  \widehat{\mathcal{K}}(\hat p^\ast r)_{\hat{v}}
  \;=\;
  2\pi \;-\; \sum_{\hat\triangle\succ \hat v}\theta_{\hat v}^{\hat\triangle}(\hat p^\ast r)
  \;=\;
  2\pi \;-\; \sum_{v_i(\hat\triangle)=\hat v}\theta_{i}^{\hat p(\hat\triangle)}(r),
  \quad \hat{v}\in \widehat{V}.
\end{equation}

Let $\deg(\hat p)$ denote the degree of the finite covering $\hat p$. Since $\hat p$ is finite-to-one on vertices, we define a \emph{normalized} push-forward map on functions
\[
\hat p_\ast \colon \mathbb{R}^{\widehat{V}} \longrightarrow \mathbb{R}^{V}
\]
by averaging along the fibers:
\[
(\hat p_\ast {f})_v
  \;\coloneqq\;
  \frac{1}{\deg(\hat p)}
  \sum_{ \hat p(\hat{v})=v}
  {f}_{\hat{v}},
  \quad v\in V.
\]
In particular, $\hat p_\ast$ sends the constant function $1$ on $\widehat{V}$ to the constant function $1$ on $V$.

We apply this to the curvature vector $\widehat{\mathcal{K}}(\hat p^\ast r)$.
Using \eqref{eq:Khat-on-pullback}, we obtain
\begin{align*}
  (\hat p_\ast\, \widehat{\mathcal{K}}(\hat p^\ast r))_v
  &= \frac{1}{\deg(\hat p)}
     \sum_{\hat p(\hat{v})=v}
     \Bigl(
       2\pi \;-\; \sum_{\hat \triangle\succ \hat v}\theta_{\hat v}^{\hat \triangle}(\hat p^\ast r)
     \Bigr) \\
  &= \frac{1}{\deg(\hat p)}
     \sum_{\hat p(\hat{v})=v}
     \Bigl(
       2\pi - \sum_{v_i(\hat \triangle)=\hat v}\theta_{i}^{\hat p(\hat\triangle)}(r)
     \Bigr) \\
  &= 2\pi
     \;-\;
     \frac{1}{\deg(\hat p)}
     \sum_{\hat p(\hat{v})=v}
     \sum_{v_i(\hat\triangle)=\hat v}\theta_{i}^{\hat p(\hat\triangle)}(r) .
\end{align*}
On the other hand, for each triangle $\triangle\in F$ containing $v$,
its preimages under $\hat p$ form a finite collection of triangles
$\hat{\triangle}\in\widehat{F}$ with $\hat p(\hat{\triangle})=\triangle$,
and each such $\hat{\triangle}$ contains exactly as many lifts
$\hat{v}$ of $v$. Using the uniformity and local homeomorphism property of the covering, one checks that
\begin{equation}\label{cone-angle}
  \frac{1}{\deg(\hat p)}
     \sum_{\hat p(\hat{v})=v}
     \sum_{v_i(\hat\triangle)=\hat v}\theta_{i}^{\hat p(\hat\triangle)}(r)
  \;=\;
  \sum_{v_i(\triangle)=v}\theta_{i}^{\triangle}(r).
\end{equation}
Indeed, for any $\hat v$ with $\hat p(\hat v)=v$, the condition $v_i(\hat\triangle)=\hat v$ is equivalent to $v_i\big(\hat p(\hat\triangle)\big)=v$ because $d_i\circ\hat p = \hat p\circ d_i$. The map $\hat\triangle\mapsto\hat p(\hat\triangle)$ is a bijection between triangles containing $\hat v$ and those containing $v$, since $\hat p$ is a local homeomorphism. Therefore the sums on both sides consist of exactly the same terms, and the equality follows.
Hence for every $v\in V$ we obtain
\[
(\hat p_\ast\, \widehat{\mathcal{K}}(\hat p^\ast r))_v
  \;=\;
  2\pi - \sum_{v_i(\triangle)=v}\theta_{i}^{\triangle}(r)
  \;=\;
  \mathcal{K}(r)_v.
\]
In vector form,
\begin{equation}\label{eq:K-pull-push}
  \mathcal{K}(r) \;=\; \hat p_\ast\,\widehat{\mathcal{K}}(\hat p^\ast r),
  \quad r\in \mathbb{R}_{>0}^V.
\end{equation}
Equivalently, we write this schematically as
\[
\mathcal{K} \;=\; \hat p_\ast\,\widehat{\mathcal{K}}\,\hat p^\ast.
\]
In words: the curvature map on $\mathcal{T}$ is obtained by first pulling back
the radii to the cover $\widehat{\mathcal{T}}$, computing curvature there, and
then pushing the curvature back down by averaging over the fibers of $\hat p$.

\subsection{Proof of the Circle Pattern Theorem~\ref{T-1-2}}

We are interested in the range of the curvature map
\[
\mathcal{K} \colon \mathbb{R}_{>0}^V \to \mathbb{R}^V.
\]
From the pushforward relation \eqref{eq:K-pull-push}, the range of $\mathcal{K}$ is exactly the push-forward of the curvature values on the covering, restricted to the pulled-back radii:
\begin{equation}\label{eq:range-reduction}
  \mathcal{K}(\mathbb{R}_{>0}^V)
  \;=\;
  \hat p_\ast\Bigl(
    \widehat{\mathcal{K}}\bigl(\hat p^\ast(\mathbb{R}_{>0}^V)\bigr)
  \Bigr).
\end{equation}
This allows us to study $\mathcal{K}$ via $\widehat{\mathcal{K}}$ when the cover $\widehat{\mathcal{T}}$ has better combinatorial properties.

Let $G$ be the (finite) group of deck transformations of the covering $\hat p\colon\widehat{\Sigma}\to\Sigma$. Each $ g\in G$ induces a simplicial automorphism of $\widehat{\mathcal{T}}$ satisfying $\hat p\circ g=\hat p$ and $ g\circ d_i=d_i\circ g$ where appropriate. Since $\Phi$ on $\widehat{E}$ is defined by pullback via $\hat p$, we have $\Phi( g(\hat e))=\Phi(\hat e)$ for all $\hat e\in\widehat{E}$; hence $\Phi$ is $G$-invariant.

The deck transformation group $G = \operatorname{Deck}(\widehat{\Sigma}/\Sigma)$ is isomorphic to the quotient $\pi_1(\Sigma)/\pi_1(\widehat{\Sigma})$. This can be seen from the covering tower $\widetilde{\Sigma}\to\widehat{\Sigma}\to\Sigma$ where $\widetilde{\Sigma}$ is the universal cover of $\Sigma$, yielding the exact sequence
\[
1 \longrightarrow \pi_1(\widehat{\Sigma}) \longrightarrow \pi_1(\Sigma) \longrightarrow G \longrightarrow 1.
\]

A circle packing metric $\hat r\in\mathbb{R}_{>0}^{\widehat{V}}$ is called \emph{$G$-invariant} if $\hat r_{ g(\hat v)}=\hat r_{\hat v}$ for every $\hat v\in\widehat{V}$ and every $ g\in G$.

\begin{proposition}\label{prop:G-invariance}
Let $\hat r\in\mathbb{R}_{>0}^{\widehat{V}}$ be a circle packing metric on the lifted triangulation $\widehat{\mathcal{T}}$. Then $\hat r$ is $G$-invariant if and only if its curvature vector $\widehat{\mathcal{K}}(\hat r)$ is $G$-invariant.
\end{proposition}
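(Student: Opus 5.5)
The forward direction is a naturality statement. For the converse we use the injectivity of the curvature map on the simplicial cover $\widehat{\mathcal{T}}$, which rests on the monotonicity in Lemma~\ref{L-2-4}.

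\emph{Step 1: equivariance.} For $g\in G$ and $\hat r\in\mathbb{R}_{>0}^{\widehat V}$ define $(g\cdot\hat r)_{\hat v}=\hat r_{g^{-1}(\hat v)}$. Since $g$ is a simplicial automorphism commuting with the face maps and $\Phi$ is $G$-invariant, the geometric triangle built on $g(\hat\triangle)$ with radii $g\cdot\hat r$ is isometric to the one built on $\hat\triangle$ with radii $\hat r$. The inner angle at $g(\hat v)$ therefore equals the angle at $\hat v$. Summing over the triangles incident to a vertex gives
\[
\widehat{\mathcal{K}}(g\cdot\hat r)=g\cdot\widehat{\mathcal{K}}(\hat r)\qquad\text{for all } g\in G.
\]
If $\hat r$ is $G$-invariant, then $g\cdot\hat r=\hat r$, so $g\cdot\widehat{\mathcal{K}}(\hat r)=\widehat{\mathcal{K}}(\hat r)$. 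This proves the "only if" direction.

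\emph{Step 2: injectivity.} Conversely, suppose $\widehat{\mathcal{K}}(\hat r)$ is $G$-invariant. By Step~1, $\widehat{\mathcal{K}}(g\cdot\hat r)=\widehat{\mathcal{K}}(\hat r)$ for every $g$, so it suffices to know that $\widehat{\mathcal{K}}$ is injective up to the natural ambiguity. In the hyperbolic background $\widehat{\mathcal{K}}$ is injective on $\mathbb{R}_{>0}^{\widehat V}$. In the Euclidean background it is injective modulo global scaling $\hat r\mapsto\lambda\hat r$. In the Euclidean case this yields $g\cdot\hat r=\lambda_g\hat r$. Comparing the sums of radii, $\sum_{\hat v}\hat r_{\hat v}=\lambda_g\sum_{\hat v}\hat r_{\hat v}$ because $g$ permutes $\widehat V$, so $\lambda_g=1$. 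Hence $\hat r$ is $G$-invariant.

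\emph{Main obstacle: establishing the injectivity.} This is the content underlying Theorem~\ref{T-1-1} and the part that needs care. I would prove it by the classical maximum-principle argument using Lemma~\ref{L-2-4}. Suppose $\widehat{\mathcal{K}}(\hat r)=\widehat{\mathcal{K}}(\hat r')$, and in the Euclidean case normalize so that $\max_{\hat v}\hat r'_{\hat v}/\hat r_{\hat v}=1$. Let $J$ be the set of vertices where $\hat r'_{\hat v}/\hat r_{\hat v}$ attains its maximum (in the hyperbolic case, where $\hat r'-\hat r$ is maximal and positive, assumed nonempty). Deform radii on $J$ and invoke Lemma~\ref{L-2-4}:

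1. the angle at a vertex of $J$ decreases in its own radius;
2. it increases weakly in the neighbours' radii;
3. the total angle sum of each triangle strictly decreases.

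Together these force the angle sum over $J$ to change strictly unless $J$ is everything. Alternatively, one can use the variational principle: $\theta$ has a symmetric Jacobian in the coordinates $u=\ln r$ (Euclidean) or $u=\ln\tanh(r/2)$ (hyperbolic), so one obtains a strictly convex potential (on the slice $\sum u=0$ in the Euclidean case) whose gradient is $\widehat{\mathcal{K}}$, giving injectivity.

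In the hyperbolic case one could avoid injectivity altogether by averaging: the curvatures are invariant under $g$, and uniqueness of solutions then holds via convexity. Either way, the key input is the global injectivity of $\widehat{\mathcal{K}}$ under~\eqref{cond:S}, which I take from the cited works of Zhou and Xu.
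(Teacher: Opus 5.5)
Your proposal is correct and follows the paper's proof. The forward direction comes from the equivariance $\widehat{\mathcal{K}}(g\cdot\hat r)=g\cdot\widehat{\mathcal{K}}(\hat r)$. The converse comes from injectivity of $\widehat{\mathcal{K}}$ (modulo scaling in the Euclidean case), which you take from the literature just as the paper takes it from Chow--Luo. Your sum-of-radii computation makes explicit the paper's remark that a permutation of $\widehat V$ cannot be a nontrivial rescaling.

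Only your optional maximum-principle sketch needs adjusting, and it does not affect the proof because you rely on the cited injectivity:
\begin{itemize}
\item In the Euclidean background each triangle's angle sum is identically $\pi$, so your third property is not available there. Strictness must instead come from scale invariance together with $\partial\theta_i/\partial r_i<0$.
\item In the hyperbolic case, take $J=\{\hat v:\hat r'_{\hat v}>\hat r_{\hat v}\}$; with this choice your three properties suffice. The set where $\hat r'-\hat r$ is maximal does not obviously work, since radii off $J$ may also increase.
\end{itemize}
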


\begin{proof}
If $\hat r$ is $G$-invariant, then for any $g\in G$ and any $\hat v\in\widehat{V}$, the edge lengths and interior angles computed from $\hat r$ are unchanged under the deck transformation $g$, because the intersection angles $\Phi$ are $G$-invariant by construction. Hence $\widehat{\mathcal{K}}(\hat r)_{g(\hat v)} = \widehat{\mathcal{K}}(\hat r)_{\hat v}$, so $\widehat{\mathcal{K}}(\hat r)$ is $G$-invariant.

Conversely, assume $\widehat{\mathcal{K}}(\hat r)$ is $G$-invariant. The curvature map $\widehat{\mathcal{K}}$ is smooth and, by Lemma~\ref{L-2-4}, its Jacobian is everywhere positive definite in the hyperbolic background, and positive semi-definite with a one-dimensional kernel in the Euclidean background (see Chow--Luo~\cite[Proposition~3.9]{Chow-Luo}). Thus $\widehat{\mathcal{K}}$ is injective in the hyperbolic background, and in the Euclidean background its fibers consist of constant rescalings. Suppose, for contradiction, that $\hat r$ is not $G$-invariant. Then there exist $g\in G$ and $\hat v\in\widehat{V}$ such that $\hat r_{g(\hat v)}\neq \hat r_{\hat v}$. In the hyperbolic background, injectivity directly yields $\widehat{\mathcal{K}}(\hat r)\neq\widehat{\mathcal{K}}(g\cdot \hat r)$. In the Euclidean background, if $g\cdot \hat r$ were a scalar multiple of $\hat r$, the fact that $g$ acts as a permutation on $\widehat{V}$ would force $\hat r_{g(\hat v)} = \hat r_{\hat v}$ for every $\hat v$, contradicting the choice of $g$ and $\hat v$. Hence $g\cdot \hat r$ and $\hat r$ lie in different scaling fibers, so again $\widehat{\mathcal{K}}(\hat r)\neq\widehat{\mathcal{K}}(g\cdot \hat r)$. In either case, equivariance gives $\widehat{\mathcal{K}}(g\cdot \hat r)=g\cdot\widehat{\mathcal{K}}(\hat r)$, while $G$-invariance of $\widehat{\mathcal{K}}(\hat r)$ implies $g\cdot\widehat{\mathcal{K}}(\hat r)=\widehat{\mathcal{K}}(\hat r)$, a contradiction. Therefore $\hat r$ must be $G$-invariant.
\end{proof}

Thus on $\widehat{\mathcal{T}}$ the curvature map $\widehat{\mathcal{K}}$ establishes a bijection between $G$-invariant circle packing metrics and $G$-invariant curvature vectors. In particular, the pulled-back radii $\hat p^*(\mathbb{R}_{>0}^V)$ are exactly the $G$-invariant circle packing metrics, and their curvature images under $\widehat{\mathcal{K}}$ are precisely the $G$-invariant curvature vectors. We now proceed to the proof of Theorem~\ref{T-1-2}.

\begin{proof}[Proof of Theorem~\ref{T-1-2}]
Since $\hat p^*(\mathbb{R}_{>0}^V)$ is precisely the set of $G$-invariant circle packing metrics on $\widehat{\mathcal{T}}$, and $\widehat{\mathcal{K}}$ restricts to a bijection between $G$-invariant radii and $G$-invariant curvatures, it follows that $\widehat{\mathcal{K}}(\hat p^*(\mathbb{R}_{>0}^V))$ is exactly the set of $G$-invariant curvature vectors that lie in the image of $\widehat{\mathcal{K}}$.

By Theorem~\ref{T-1-1} applied to the simplicial triangulation $\widehat{\mathcal{T}}$, a vector $\widehat{K}\in\mathbb{R}^{\widehat{V}}$ belongs to $\widehat{\mathcal{K}}(\mathbb{R}_{>0}^{\widehat{V}})$ if and only if it satisfies the KAT inequalities on $\widehat{\mathcal{T}}$ together with the appropriate global Gauss--Bonnet condition. Restricting to $G$-invariant vectors, we conclude that a vector $K\in\mathbb{R}^V$ lies in the image of $\mathcal{K}$ if and only if the following two conditions hold:
\begin{enumerate}
    \item For every non-empty proper subset $\hat I\subsetneq\widehat{V}$, the pulled-back curvature $\hat p^*K$ satisfies
    \[
\sum_{\hat v\in \hat I} (\hat p^*K)_{\hat v} \;>\; -\sum_{(\hat e,\hat \triangle)\in\operatorname{Lk}(\hat I)}(\pi-\Phi(e)) + 2\pi\chi(\widehat \Sigma(\hat I)).
    \]
    \item The total curvature obeys the global Gauss--Bonnet constraint
    \[
    \sum_{\hat v\in \widehat V}(\hat p^*K)_{\hat v} = 2\pi\chi(\widehat\Sigma)\text{ in the Euclidean background},\quad
    \sum_{\hat v\in\widehat  V}(\hat p^*K)_{\hat v} > 2\pi\chi(\widehat\Sigma)\text{ in the hyperbolic background}.
    \]
\end{enumerate}
Since $\chi(\widehat{\Sigma}) = \deg(\hat p)\,\chi(\Sigma)$ and the pushforward $\hat p_\ast$ averages the fibre sum, these conditions translate exactly to those stated in Theorem~\ref{T-1-2}.
\end{proof}

\subsection{Why a covering is necessary}\label{sec:why-covering}

One might attempt to write down KAT inequalities directly on the original
quasi-simplicial triangulation $\mathcal{T}$, using only the combinatorics of
$V,E,F$.  Since $\mathcal{T}$ is now a Delta complex, the link of a vertex subset
should be defined with care.  For a non-empty proper subset $I\subset V$, we set
\[
\operatorname{Lk}(I)=\bigl\{(e,\triangle)\mid \exists i\text{ with }v_i(\triangle)\in I,\;
d_i(\triangle)=e,\; v_s(\triangle)\notin I\text{ for }s\neq i\bigr\}.
\]
By Lemma~\ref{L-2-3}, when all radii $r_v$ for $v\in I$ tend to $0$ while the others
remain fixed, the interior angles converge to the limits prescribed there. Consequently,
one can compute the limit of the total curvature $\sum_{v\in I}K_v$ as
\[
\lim_{r_v\to0\;(v\in I)}\sum_{v\in I}K_v = 2\pi|I| - (|A_2|+|A_3|)\pi
  \;-\; \sum_{(e,\triangle)\in\operatorname{Lk}(I)}(\pi-\Phi(e)),
\]
where $A_2$ (resp.\ $A_3$) denotes the set of triangles $\triangle$ for which exactly
two (resp.\ three) of the indexed vertices $v_0(\triangle),v_1(\triangle),v_2(\triangle)$
belong to $I$; here vertices are counted with multiplicity according to their indices,
so even if some $v_i(\triangle)$ coincide as points of $\Sigma$, each occurrence
contributes separately.  This limit computation is the combinatorial analogue of the
degenerate circle packing limits studied by Thurston~\cite{Thurston}, with the details filled in by
Marden--Rodin~\cite{MR90}, and later refined in the framework of combinatorial
Ricci flow by Chow--Luo~\cite{Chow-Luo}.  By an argument completely analogous
to the one in~\cite{Chow-Luo}, we obtain the KAT inequality
\[
\sum_{v\in I}K_v \;>\; -\sum_{(e,\triangle)\in\operatorname{Lk}(I)}(\pi-\Phi(e))
  \;+\; 2\pi\chi(\Sigma(I)).
\]

However, the collection of such inequalities (one for each
$\varnothing\neq I\subsetneq V$) is \emph{not sufficient} to characterize the image
of $\mathcal{K}$.  Indeed, consider a subset of the lifted vertex set of the form
$\hat I=\hat p^{-1}(I)$ for some $I\subset V$.  For such a subset, the KAT inequality
given by Theorem~\ref{T-1-2} reads
\[
\sum_{\hat v\in\hat p^{-1}(I)} K_{\hat p(\hat v)}
\;>\; -\sum_{(\hat e,\hat\triangle)\in\operatorname{Lk}(\hat p^{-1}(I))}
   (\pi-\Phi(\hat e)) \;+\; 2\pi\chi\bigl(\widehat\Sigma(\hat p^{-1}(I))\bigr).
\]
Because $\hat p$ is a regular covering, one has
$\sum_{\hat v\in\hat p^{-1}(I)} K_{\hat p(\hat v)} = \deg(\hat p)\sum_{v\in I}K_v$,
and basic covering theory gives
\[
\chi\bigl(\widehat\Sigma(\hat p^{-1}(I))\bigr) = \deg(\hat p)\,\chi(\Sigma(I))
\]
and
\[
\sum_{(\hat e,\hat\triangle)\in\operatorname{Lk}(\hat p^{-1}(I))}
   (\pi-\Phi(\hat e)) = \deg(\hat p)
   \sum_{(e,\triangle)\in\operatorname{Lk}(I)}(\pi-\Phi(e)).
\]
Dividing the inequality on the cover by $\deg(\hat p)$ therefore reproduces exactly the
KAT inequality on the base triangulation for the subset $I$.  Thus the inequalities
on $\hat p^*\mathcal{T}$ that correspond to subsets $\hat I$ that are preimages of subsets of~$V$
are precisely equivalent to those one would write down directly on $\mathcal{T}$.

The crucial point is that the lifted triangulation $\hat p^*\mathcal{T}$ possesses many subsets
$\hat I\subset\widehat{V}$ that are \emph{not} of the form $\hat p^{-1}(I)$ for any
$I\subset V$.  These give additional constraints that are invisible when working
purely on $\mathcal{T}$.  Therefore the inequalities obtained directly from
$\mathcal{T}$ are only necessary conditions, and they are in general not sufficient.
This is why the covering approach is essential: it supplies the full set of
constraints that characterize the curvature range.

\section{Acknowledgements}

\vspace{12pt}
The first author is partially supported by National Natural Science Foundation of China (Grant No. 12171480), Hunan Provincial Natural Science Foundation of China (Grant No. 2022JJ10059) and Scientific Research Program of NUDT (Grant No. JS2023-01 and No. IISF-C24001). 

\section*{Data Availability Statement}
Data sharing is not applicable to this theoretical study as no new data were created.


\bibliographystyle{acm}
\bibliography{refs}

\end{document}

%% file: 1vertex-T2.tex
\begin{tikzpicture}[line cap=round,line join=round,>=triangle 45,x=1.0cm,y=1.0cm]
\clip(-0.5,-3.5) rectangle (6.5,1.);
\fill[line width=0.pt,fill=black,fill opacity=0.10000000149011612] (2.,0.) -- (0.,0.) -- (0.,-2.) -- (2.,-2.) -- cycle;
\fill[line width=0.pt,dash pattern=on 1pt off 1pt,fill=black,fill opacity=0.10000000149011612] (3.,0.5) -- (3.,-2.5) -- (6.,-2.5) -- (6.,0.5) -- cycle;
\draw [line width=0.4pt,dash pattern=on 1pt off 1pt] (2.,0.)-- (0.,0.);
\draw [line width=0.4pt] (0.,-2.)-- (2.,0.);
\draw [line width=0.4pt] (0.,-2.)-- (0.,0.);
\draw [line width=0.4pt] (0.,-2.)-- (2.,-2.);
\draw [line width=0.4pt,dash pattern=on 1pt off 1pt] (0.,0.)-- (2.,0.);
\draw [line width=0.4pt,dash pattern=on 1pt off 1pt] (2.,0.)-- (2.,-2.);
\draw [line width=0.4pt,dash pattern=on 1pt off 1pt] (3.,0.5)-- (3.,-2.5);
\draw [line width=0.4pt] (3.,0.5)-- (3.,-0.5);
\draw [line width=0.4pt] (3.,-0.5)-- (3.,-1.5);
\draw [line width=0.4pt] (3.,-1.5)-- (3.,-2.5);
\draw [line width=0.4pt] (3.,-2.5)-- (4.,-2.5);
\draw [line width=0.4pt] (4.,-2.5)-- (5.,-2.5);
\draw [line width=0.4pt] (5.,-2.5)-- (6.,-2.5);
\draw [line width=0.4pt,dash pattern=on 1pt off 1pt] (6.,-2.5)-- (6.,-1.5);
\draw [line width=0.4pt,dash pattern=on 1pt off 1pt] (6.,-1.5)-- (6.,-0.5);
\draw [line width=0.4pt,dash pattern=on 1pt off 1pt] (6.,-0.5)-- (6.,0.5);
\draw [line width=0.4pt,dash pattern=on 1pt off 1pt] (6.,0.5)-- (5.,0.5);
\draw [line width=0.4pt,dash pattern=on 1pt off 1pt] (5.,0.5)-- (4.,0.5);
\draw [line width=0.4pt,dash pattern=on 1pt off 1pt] (4.,0.5)-- (3.,0.5);
\draw [line width=0.4pt] (4.,0.5)-- (4.,-0.5);
\draw [line width=0.4pt] (4.,-0.5)-- (4.,-1.5);
\draw [line width=0.4pt] (4.,-1.5)-- (4.,-2.5);
\draw [line width=0.4pt] (5.,-2.5)-- (5.,-1.5);
\draw [line width=0.4pt] (5.,-1.5)-- (5.,-0.5);
\draw [line width=0.4pt] (5.,-0.5)-- (5.,0.5);
\draw [line width=0.4pt] (5.,-0.5)-- (6.,-0.5);
\draw [line width=0.4pt] (5.,-1.5)-- (6.,-1.5);
\draw [line width=0.4pt] (5.,-1.5)-- (4.,-1.5);
\draw [line width=0.4pt] (4.,-1.5)-- (3.,-1.5);
\draw [line width=0.4pt] (3.,-0.5)-- (4.,-0.5);
\draw [line width=0.4pt] (4.,-0.5)-- (5.,-0.5);
\draw [line width=0.4pt] (3.,-0.5)-- (4.,0.5);
\draw [line width=0.4pt] (5.,0.5)-- (4.,-0.5);
\draw [line width=0.4pt] (4.,-0.5)-- (3.,-1.5);
\draw [line width=0.4pt] (3.,-2.5)-- (4.,-1.5);
\draw [line width=0.4pt] (4.,-1.5)-- (5.,-0.5);
\draw [line width=0.4pt] (5.,-0.5)-- (6.,0.5);
\draw [line width=0.4pt] (6.,-0.5)-- (5.,-1.5);
\draw [line width=0.4pt] (5.,-1.5)-- (4.,-2.5);
\draw [line width=0.4pt] (5.,-2.5)-- (6.,-1.5);
\draw (0.8,-2.6) node[anchor=north west] {$\mathcal{T}$};
\draw (4.3,-2.6) node[anchor=north west] {$\hat{\mathcal{T}}$};
\begin{scriptsize}
\draw [fill=ffffff] (2.,0.) circle (1.5pt);
\draw [fill=ffffff] (0.,0.) circle (1.5pt);
\draw [fill=black] (0.,-2.) circle (1.5pt);
\draw [fill=ffffff] (2.,-2.) circle (1.5pt);
\draw [fill=ffffff] (3.,0.5) circle (1.5pt);
\draw [fill=black] (3.,-2.5) circle (1.5pt);
\draw [fill=ffffff] (6.,-2.5) circle (1.5pt);
\draw [fill=ffffff] (6.,0.5) circle (1.5pt);
\draw [fill=black] (3.,-0.5) circle (1.5pt);
\draw [fill=black] (3.,-1.5) circle (1.5pt);
\draw [fill=black] (4.,-2.5) circle (1.5pt);
\draw [fill=black] (5.,-2.5) circle (1.5pt);
\draw [fill=ffffff] (6.,-1.5) circle (1.5pt);
\draw [fill=ffffff] (6.,-0.5) circle (1.5pt);
\draw [fill=ffffff] (5.,0.5) circle (1.5pt);
\draw [fill=ffffff] (4.,0.5) circle (1.5pt);
\draw [fill=black] (4.,-0.5) circle (1.5pt);
\draw [fill=black] (4.,-1.5) circle (1.5pt);
\draw [fill=black] (5.,-1.5) circle (1.5pt);
\draw [fill=black] (5.,-0.5) circle (1.5pt);
\end{scriptsize}
\end{tikzpicture}

%% file: 3f.tex
\begin{tikzpicture}[line cap=round,line join=round,>=triangle 45,x=1.0cm,y=1.0cm]
\clip(-2.4,-2.3) rectangle (4.65,3.3);
\draw (-0.21765919811320703,-0.03893805309734472) node[anchor=north west] {$C_i$};
\draw (2.55,0.2) node[anchor=north west] {$C_j$};
\draw (-0.2,1.8) node[anchor=north west] {$C_k$};
\draw (0.2,1) node[anchor=north west] {$\theta_k$};
\draw (-0.5,0.6) node[anchor=north west] {$\theta_i$};
\draw (2.05,0.1) node[anchor=north west] {$\theta_j$};
\draw (2.2307193396226426,2.7) node[anchor=north west] {$\Phi_i$};
\draw (-2.25,1.5) node[anchor=north west] {$\Phi_j$};
\draw (1,-1.65) node[anchor=north west] {$\Phi_k$};
\draw [line width=0.4pt] (0.,0.) circle (2.cm);
\draw [line width=0.4pt] (2.59456,0.) circle (2.cm);
\draw [line width=0.4pt] (0.333458,1.24296) circle (2.cm);
\draw [line width=0.4pt,color=qqffqq] (0.333458,1.24296)-- (0.,0.);
\draw [line width=0.4pt,color=qqffqq] (0.,0.)-- (2.59456,0.);
\draw [line width=0.4pt,color=qqffqq] (2.59456,0.)-- (0.333458,1.24296);
\draw [line width=0.4pt,color=ffqqqq] (2.2002183784196476,1.960738301122661)-- (1.8998302237934257,2.7419721505240218);
\draw [line width=0.4pt,color=ffqqqq] (2.2002183784196476,1.960738301122661)-- (3.1758814700510114,2.156962634298444);
\draw [line width=0.4pt,color=ffqqqq] (1.29728,-1.5221907244494692)-- (0.46741219547902135,-2.2294417201079395);
\draw [line width=0.4pt,color=ffqqqq] (1.29728,-1.5221907244494692)-- (2.168823308804934,-2.264959475753864);
\draw [line width=0.4pt,color=ffqqqq] (-1.6622600144255697,1.11215630396177)-- (-1.73541680961735,2.228335268806968);
\draw [line width=0.4pt,color=ffqqqq] (-1.6622600144255697,1.11215630396177)-- (-2.2659443865455895,0.20987270449461293);
\draw [shift={(2.2002183784196476,1.960738301122661)},line width=0.4pt]  plot[domain=0.19847123811168724:1.9378738114356528,variable=\t]({1.*0.39004988360217235*cos(\t r)+0.*0.39004988360217235*sin(\t r)},{0.*0.39004988360217235*cos(\t r)+1.*0.39004988360217235*sin(\t r)});
\draw [shift={(-1.6622600144255697,1.11215630396177)},line width=0.4pt]  plot[domain=1.636244889784653:4.122729364137917,variable=\t]({1.*0.5913903590733618*cos(\t r)+0.*0.5913903590733618*sin(\t r)},{0.*0.5913903590733618*cos(\t r)+1.*0.5913903590733618*sin(\t r)});
\draw [shift={(1.29728,-1.5221907244494692)},line width=0.4pt]  plot[domain=3.8473888283845668:5.577389132384813,variable=\t]({1.*0.6833469777656588*cos(\t r)+0.*0.6833469777656588*sin(\t r)},{0.*0.6833469777656588*cos(\t r)+1.*0.6833469777656588*sin(\t r)});
\draw [shift={(2.59456,0.)},line width=0.4pt]  plot[domain=2.6389688515685665:3.141592653589793,variable=\t]({1.*0.4935360672310792*cos(\t r)+0.*0.4935360672310792*sin(\t r)},{0.*0.4935360672310792*cos(\t r)+1.*0.4935360672310792*sin(\t r)});
\draw [shift={(0.,0.)},line width=0.4pt]  plot[domain=0.:1.3086908001663882,variable=\t]({1.*0.3922795005297576*cos(\t r)+0.*0.3922795005297576*sin(\t r)},{0.*0.3922795005297576*cos(\t r)+1.*0.3922795005297576*sin(\t r)});
\draw [shift={(0.333458,1.24296)},line width=0.4pt]  plot[domain=4.450283453756181:5.780561505158359,variable=\t]({1.*0.345612977927395*cos(\t r)+0.*0.345612977927395*sin(\t r)},{0.*0.345612977927395*cos(\t r)+1.*0.345612977927395*sin(\t r)});
\begin{scriptsize}
\draw [fill=black] (0.,0.) circle (1.5pt);
\draw [fill=black] (2.59456,0.) circle (1.5pt);
\draw [fill=black] (0.333458,1.24296) circle (1.5pt);
\end{scriptsize}
\end{tikzpicture}